\def\timestamp{%
Time-stamp: <on-Hstar.tex: Monday 13-01-2014 at 21:47:40 (cet)>}
\def\stripname Time-stamp: <#1 #2>{#2}
\edef\filedate{\expandafter\stripname\timestamp}
\DeclareMathSymbol\HH 0{AMSb}{`H}
\DeclareMathSymbol\II 0{AMSb}{`I}
\DeclareMathSymbol\M  0{AMSb}{`M}
\DeclareMathSymbol\Q  0{AMSb}{`Q}
\DeclareMathSymbol\R  0{AMSb}{`R}
\DeclareMathSymbol\Z  0{AMSb}{`Z}
\newcommand\betaH{\beta\HH}
\newcommand\Hstar{\HH^*}
\newcommand\Mstar{\M^*}
\newcommand\Nstar{\omega^*} \let\N\omega
\DeclareMathSymbol\restr\mathbin{AMSa}{"16}  
\DeclareMathSymbol\forces\mathrel{AMSa}{"0D}
\DeclareMathSymbol\le    \mathrel{AMSa}{"36}
\DeclareMathSymbol\ge    \mathrel{AMSa}{"3E}
\DeclareMathSymbol\preceq\mathrel{AMSa}{"34}
\newcommand\calA{\mathcal{A}}
\newcommand\calC{\mathcal{C}}
\newcommand\calF{\mathcal{F}}
\newcommand\Pow{\mathcal{P}}
\newcommand\calS{\mathcal{S}}
\newcommand\axiom{\mathsf}
\newcommand\CH{\axiom{CH}}
\newcommand\ZFC{\axiom{ZFC}}
\newcommand\orpr[2]{\langle{#1},{#2}\rangle}
\newcommand\bigorpr[2]{\bigl<{#1},{#2}\bigr>}
\newcommand\trip[3]{\langle{#1},{#2},{#3}\rangle}
\newcommand\btrip[3]{\bigl<{#1},{#2},{#3}\bigr>}
\newcommand\card[1]{\mathopen|{#1}\mathclose|}
\newcommand\dom{\operatorname{dom}}
\newcommand\cl{\operatorname{cl}}
\renewcommand\Im{\operatorname{Im}}
\newcommand\cee{\mathfrak{c}}
\newcommand{\Ex}{\operatorname{Ex}}
\newcommand{\Fn}{\operatorname{Fn}}
\newcommand\CE{C_E}
\newcommand\functions[2]{\vphantom{#2}^{#1}#2}
\let\fname\dot
\newcommand\preim{^\gets}
\newcommand\concat{\mathbin{{}^\frown}}
\newcommand\omegaseq[2][n]{\langle{#2}_{#1}:#1\in\omega\rangle}
\newcommand\omegaoneseq[1]{\langle{#1}_\alpha:\alpha\in\omega_1\rangle}
\newcommand\abseq[1][]{\bigl<[a_n^{#1},b_n^{#1}]:n\in\omega\bigr>}
\newcommand\A[2]{A({#1},{#2})}
\newcommand\bA[2]{A\bigl({#1},{#2}\bigr)}
\newcommand\good{}
\newcommand\Good{}
\newcommand\goodt{}
\newcommand\Goodt{}
\def\good/{bad}
\def\Good/{Bad}
\def\goodt/{\good/ triple}
\def\Goodt/{\Good/ triple}
\theoremstyle{plain}
\newtheorem{theorem}{Theorem}[section]
\newtheorem{lemma}[theorem]{Lemma}
\newtheorem{proposition}[theorem]{Proposition}
\theoremstyle{definition}
\theoremstyle{remark}
\newtheorem{remark}{Remark}[section]
\newcounter{case}
\newenvironment{case}[1]{\par\smallbreak\stepcounter{case}
                      \noindent Case~\arabic{case}: \textsl{#1}.
                      \ignorespaces}{\par}
\begin{document}

\title{On subcontinua and continuous images of $\beta\R\setminus\R$}

\author[Alan Dow]{Alan Dow\dag}
\address{
Department of Mathematics\\
UNC-Charlotte\\
9201 University City Blvd. \\
Charlotte, NC 28223-0001}
\email{adow@uncc.edu}
\urladdr{http://www.math.uncc.edu/\~{}adow}
\thanks{\dag Research of the first author was supported by NSF grant 
             No.\ NSF-DMS-0901168.} 

\author{Klaas Pieter Hart}
\address{Faculty of Electrical Engineering, Mathematics and Computer Science\\  
         TU Delft\\
         Postbus 5031\\
         2600~GA {} Delft\\
         the Netherlands}
\email{k.p.hart@tudelft.nl}
\urladdr{http://fa.its.tudelft.nl/\~{}hart}

\keywords{half line, continuum, subcontinuum, Continuum Hypothesis,
          \v{C}ech-Stone remainder, $\Hstar$, continuous image, Cohen reals}
\subjclass{Primary: 54F15 Secondary: 03E50, 03E65, 54A35, 54D35, 54D40, 54G20}

\date{\filedate}

\begin{abstract}
We prove that the \v{C}ech-Stone remainder of the real line has
a family of $2^\cee$ mutually non-homeomorphic subcontinua.

We also exhibit a consistent example of a first-countable continuum
that is not a continuous image of~$\Hstar$.
\end{abstract}

\maketitle

\section*{Introduction}

This paper contains two disparate results on $\Hstar$, 
the \v{C}ech-Stone remainder of the half~line $\HH=[0,\infty)$.

\smallskip

We prove that $\Hstar$ has a family of $2^\cee$~many
mutually non-homeomorphic subcontinua.
This completes the proof of this fact begun in~\cite{MR2823685};
in that paper the first-named author showed that that $\neg\CH$, 
the negation of the Continuum Hypothesis, 
implies that such a family exists, consisting of \emph{decomposable} continua.

We prove that $\CH$ also implies the existence of a family of $2^\cee$~many 
mutually nonhomeomorphic subcontinua as well; in fact, we construct,
in one fell swoop, two families: one consisting of indecomposable, the other
of decomposable continua. 

This suggests the obvious question whether one construct from~$\ZFC$, 
or even $\ZFC+\neg\CH$, a family of $2^\cee$~many mutually non-homeomorphic
indecomposable subcontinua of~$\Hstar$.

\smallskip

Our second result concerns continuous images of~$\Hstar$.
There are various parallels between~$\Hstar$ and~$\Nstar$ as regards
their continuous images.
Some of these can be found in~\cite{MR1707489}: every continuum of 
weight~$\aleph_1$ or less is a continuous image of~$\Hstar$ and
the Continuum Hypothesis implies that the continuous images of~$\Hstar$
are exactly the continua of weight~$\cee$ or less 
(parallel to Parovi\v{c}enko's results from~\cite{Parovicenko63}
 on continuous images of~$\omega^*$).
That not all results carry over was shown in~\cite{MR2425747}:
there is a continuum that is a continuous image of~$\omega^*$ (it is even
separable) that is consistently not a continuous image of~$\Hstar$.
Also, the Open Colouring Axiom implies that $\Hstar$~itself is not a 
continuous image of~$\omega^*$, see~\cite{MR1679586}.

We present another parallel, this one of Bell's result from~\cite{MR1058795}
that, consistently,
not every first-countable compact space is a continuous image of~$\omega^*$.
We give a consistent example of a first-countable continuum that is
neither a continuous image of~$\omega^*$ nor one of~$\Hstar$.
The interest in such examples stems from Arhangel$'$ski\u\i's theorem
in~\cite{MR0251695} that compact first-countable spaces have cardinality
and hence weight at most~$\cee$ and thus are continuous images of~$\omega^*$
if one assumes~$\CH$.

\section{Preliminaries}

In this section we collect the necessary results on the subcontinua
of~$\Hstar$ that we shall need.
We refer to~\cite{Hart} for the necessary proofs and further information.

\subsection{An auxiliary space}

A useful space to have is the product $\omega\times\II$, which we denote
by~$\M$.
Its \v{C}ech-Stone compactification, $\beta\M$, and its remainder, $\Mstar$,
are very useful in the study of $\beta\HH$ and~$\Hstar$ because there are many
continuous maps from both onto their respective counterparts.

The natural projection $\pi:\M\to\omega$ extends to a surjection
$\beta\pi:\beta\M\to\beta\omega$; because $\pi$~is monotone the
extension~$\beta\pi$ is monotone as well.
For $u\in\beta\omega$ we denote the preimage $\beta\pi\preim(u)$ by~$\II_u$.
For $n\in\omega$ we simply have $\II_n=\{n\}\times\II$ but if $u\in\omega^*$
then $\II_u$~is a continuum that has a few properties that make it
resemble~$\II$ somewhat.

It has two end points, $0_u$ and~$1_u$; these are obtained by 
intersecting~$\II_u$ with the closures of $\omega\times\{0\}$ 
and $\omega\times\{1\}$ respectively.
The continuum~$\II_u$ is irreducible between these end points and thus
it is divided into layers by the following quasi-order:
$x\preceq y$ iff every subcontinuum of~$\II_u$ that contains~$0_u$ and~$y$
also contains~$x$.
These layers are the equivalence classes under the equivalence relation
`$x\preceq y$ and $y\preceq x$' and they form an upper semicontinuous
decomposition of~$\II_u$ with an ordered continuum as its decomposition space.

Many of these layers are one-point sets, for instance: every sequence
$\omegaseq{x}$ in~$\II$ determines a point~$x_u$:
the unique point of~$\II_u$ that is in the closure of
the set $\bigl\{\orpr{n}{x_n}:n\in\omega\bigr\}$.
Each such point is a cut~point and the set of these is dense in~$\II_u$,
and linearly ordered by~$\preceq$.
If $\omegaseq{x}$ is an increasing sequence in~$\II_u$ then its `supremum'
is a single layer that is non-trivial since it contains the accumulation
points of~$\omegaseq{x}$ and these form a set that is homeomorphic
to~$\omega^*$, because $\Hstar$~is an $F$-space.
Also, every layer is an indecomposable continuum; this fact will make 
some verifications in our construction relatively painless.

\subsection{Subcontinua of $\Hstar$}
\label{subsec:subcont}

We now describe a general construction of subcontinua of~$\Hstar$.
To this end let $\abseq$ be a sequence of closed intervals in~$\HH$
such that $b_{n+1}=a_n$ for all~$n$ and $\lim_{n\to\infty}a_n=\infty$.
Take the map $q:\M\to\HH$ defined by $q(n,t)=a_n+t(b_n-a_n)$ for
all~$n$ and~$t$.
This map is almost everywhere one-to-one; the exceptions are at the 
end points: we always have $q(n,1)=q(n+1,0)$.
This behaviour persists when we take $\beta q$; this map is also almost
injective, the exceptions are that $\beta q(1_u)=\beta q(0_{u+1})$ for all~$u$,
where $u+1$ is the image of~$u$ under the extension of the shift 
map $n\mapsto n+1$.

For every $u\in\omega^*$ the restriction of $\beta q$ to~$\II_u$ is
injective and hence an embedding.
We shall denote the image by $[a_u,b_u]$ and refer to such a continuum
as a standard subcontinuum of~$\Hstar$.

These continua determine the structure of the other continua completely:
every subcontinuum of~$\Hstar$ is both the intersection and the union of
families of standard subcontinua.

Some work is needed to establish the following fundamental facts:

\begin{lemma}[\cite{Hart}*{Theorem~5.8}]\label{lemma.interval}
Every decomposable subcontinuum of $\Hstar$ is a non-trivial interval
in some standard subcontinuum.  \qed
\end{lemma}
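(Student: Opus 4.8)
The plan is to reduce the statement to a fact about the auxiliary continua $\II_u$ and then read it off from their layer structure. Let $K$ be a decomposable subcontinuum of $\Hstar$, say $K=A\cup B$ with $A$ and $B$ proper subcontinua of $K$. Then $K\ne\Hstar$, because $\Hstar$ is indecomposable, so, since every subcontinuum of $\Hstar$ is an intersection of standard subcontinua, $K$ is contained in at least one standard subcontinuum. Fix such a one, $S=[a_u,b_u]$, and transport the picture to $\II_u$ by means of the embedding $\beta q\restriction\II_u$, so that $K$ becomes a decomposable subcontinuum of $\II_u$. Let $p\colon\II_u\to L_u$ be the quotient onto the ordered continuum of layers and set $[\ell_0,\ell_1]=p[K]$, an order interval in $L_u$. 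It then suffices to prove (i) $\ell_0\ne\ell_1$, possibly after first replacing $S$ by another standard subcontinuum, and (ii) $K=p\preim([\ell_0,\ell_1])$; by the definition of a standard subcontinuum these two facts yield the lemma.

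\emph{Part (ii): filling out the interval.} For every $\ell$ with $\ell_0<\ell<\ell_1$ the layer $T_\ell=p\preim(\ell)$ separates $p\preim([\ell_0,\ell_1])$: its complement there is the union of the two disjoint relatively open sets $p\preim([\ell_0,\ell))$ and $p\preim((\ell,\ell_1])$. Since $K$ is connected and, by the choice of $\ell_0$ and~$\ell_1$, meets both of these, it meets $T_\ell$. I would upgrade this to $T_\ell\subseteq K$ from the structure of $\II_u$ --- concretely, the indecomposability of the layers together with the fact that each initial segment $p\preim([\ell_0,\ell])$ is irreducible between its bottom layer and $T_\ell$ --- which forces a subcontinuum of $\II_u$ to contain every layer lying strictly between its two extreme layers. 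Hence $p\preim((\ell_0,\ell_1))\subseteq K$; as $L_u$ is dense in itself, $(\ell_0,\ell_1)$ is dense in $[\ell_0,\ell_1]$, and upper semicontinuity of the layer decomposition makes $p\preim((\ell_0,\ell_1))$ dense in $p\preim([\ell_0,\ell_1])$; since $K$ is closed this gives $K\supseteq p\preim([\ell_0,\ell_1])$, and the reverse inclusion is automatic.

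\emph{Part (i): non-degeneracy, the main obstacle.} There remains the possibility that $\ell_0=\ell_1$, that is, that $K$ is squeezed inside a single layer $T$ of $S$. Since $T$ is an indecomposable continuum while $K$ is decomposable, $K\ne T$, so $K$ is a proper --- hence nowhere dense --- subcontinuum of $T$; on the other hand $K$, being a subcontinuum of $\Hstar$, is a union of standard subcontinua, and if it is not itself one of them this representation is genuinely non-trivial. The task is then to convert this information, using the fine structure of the non-trivial layers of $\II_u$ (which are themselves analysed via increasing sequences of points and the copies of $\omega^*$ sitting inside them), into a standard subcontinuum $S'$ with $K\subseteq S'$ and $p_{S'}[K]$ non-degenerate, whereupon part (ii), applied to $S'$, completes the proof. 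Making this precise --- equivalently, showing that a decomposable continuum cannot stay trapped inside ever smaller layers --- is the hard step, and it is exactly what needs the structure theory of $\II_u$ developed in~\cite{Hart}; by comparison the reduction and the separation argument of part (ii) are routine.
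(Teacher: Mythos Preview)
The paper does not prove this lemma: it is quoted from~\cite{Hart} (Theorem~5.8 there) and closed immediately with a \qed, so there is no argument in the present paper to compare your proposal against. Your sketch follows the natural line one would take, and your own assessment of it is accurate: Part~(ii), once the projection interval is non-degenerate, is routine, while Part~(i)---ruling out that a decomposable $K$ sits inside a single layer of every standard subcontinuum containing it---is the substantive step and is precisely what the cited source supplies. In that sense your proposal and the paper agree: both defer the real work to~\cite{Hart}.

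Two small comments on your Part~(ii). First, your closure-and-density argument for absorbing the endpoint layers $T_{\ell_0}$ and~$T_{\ell_1}$ is more delicate than necessary (upper semicontinuity of the decomposition does not by itself give that $p\preim\bigl((\ell_0,\ell_1)\bigr)$ is dense in $p\preim\bigl([\ell_0,\ell_1]\bigr)$; a layer could in principle have interior). The clean route is to apply Lemma~\ref{lemma.comparable} directly: $K$ meets the indecomposable layer~$T_{\ell_0}$, and once $\ell_0\ne\ell_1$ we have $K\not\subseteq T_{\ell_0}$, so $T_{\ell_0}\subseteq K$; likewise for~$T_{\ell_1}$ and for every intermediate layer. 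Second, the same appeal to Lemma~\ref{lemma.comparable} already handles the intermediate layers without the irreducibility detour you sketch. With these adjustments Part~(ii) is a two-line consequence of Lemma~\ref{lemma.comparable}; the content of the result is entirely in Part~(i).
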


\begin{lemma}[\cite{Hart}*{Theorem~5.9}]\label{lemma.comparable}
If $K$ and $L$ are subcontinua of~$\Hstar$ that intersect and if one
of these is indecomposable then $K\subseteq L$ or $L\subseteq K$. \qed  
\end{lemma}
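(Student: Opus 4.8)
The plan is to suppose, without loss of generality, that $K$ is the indecomposable one and to show that $K$ and $L$ are comparable. If $K\subseteq L$ there is nothing to prove, so I would assume $K\not\subseteq L$ and, aiming for a contradiction, also $L\not\subseteq K$. Then $K$ and $L$ are proper subcontinua of the continuum $K\cup L$, so $K\cup L$ is \emph{decomposable}, and Lemma~\ref{lemma.interval} presents it as a non-trivial interval $C$ in a standard subcontinuum. Being a non-trivial interval, $C$ is itself an irreducible continuum between two ``end layers'', divided by the layer quasi-order into layers, with a dense set of cut points each of which forms a one-point layer; in particular every layer of $C$ is nowhere dense in $C$.

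The crux is to show that $K$ lies inside a single layer of $C$. If not, $K$ meets two distinct layers $\ell_1$ and $\ell_2$; since the cut points of $C$ are dense one can pick a cut point $c$ of $C$ whose one-point layer lies strictly between $\ell_1$ and $\ell_2$, and then $C\setminus\{c\}$ splits into two disjoint relatively open pieces, one containing all of $\ell_1$ and the other all of $\ell_2$. If $c\notin K$, then $K$ is connected yet meets both pieces — impossible; and if $c\in K$, then $c$ is a cut point of $K$ — also impossible, since an indecomposable continuum has no cut points. Hence $K\subseteq\ell$ for a single layer $\ell$ of $C$.

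To finish, note that $L\supseteq(K\cup L)\setminus K=C\setminus K\supseteq C\setminus\ell$; since $\ell$ is nowhere dense in $C$ the set $C\setminus\ell$ is dense in $C$, and $L$ is closed, so $L\supseteq\cl_C(C\setminus\ell)=C$, whence $L=C\supseteq K$, contradicting $K\not\subseteq L$. I expect the main obstacle to be the opening reduction — seeing that the incomparability of $K$ and $L$ forces $K\cup L$ to be decomposable, so that Lemma~\ref{lemma.interval} applies and hands us the concrete ``arc of layers'' $C$ — together with the localisation of the indecomposable $K$ inside a single layer. The localisation needs nothing beyond the standard fact that indecomposable continua have no cut points and the properties of the layer quasi-order recalled in the preliminaries and in~\cite{Hart} (chiefly that the cut points are dense, hence the layers nowhere dense, and that removing a cut point disconnects the continuum into the points below and the points above it). Once these are available the conclusion is soft, since a subcontinuum that covers a dense subset of $C$ must be all of $C$.
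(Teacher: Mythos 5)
The paper gives no proof of this lemma --- it is imported verbatim from \cite{Hart} (Theorem~5.9) --- but your argument is correct and is essentially the standard derivation from the machinery the paper does set up: incomparability makes $K\cup L$ decomposable, Lemma~\ref{lemma.interval} realises it as a non-trivial interval $C$ of a standard subcontinuum, the no-cut-point property of indecomposable continua confines $K$ to a single (nowhere dense) layer, and the closedness of $L$ together with the density of $C\setminus\ell$ forces $L=C\supseteq K$. The one step worth spelling out is the existence of a cut point strictly between two distinct layers $\ell_1\prec\ell_2$: the layer quotient is a connected ordered continuum, so the open interval between the images of $\ell_1$ and $\ell_2$ is nonempty, its preimage is open in $C$, and it therefore meets the dense set of one-point cut-point layers.
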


In particular: if a standard subcontinuum $K$ intersects an 
indecomposable subcontinuum~$L$ then 
either $K\subseteq L$ and $K$~is nowhere dense in~$L$, 
or $L$~is contained in a layer of~$K$ and hence nowhere dense in~$K$.

\begin{lemma}[\cite{Hart}*{Theorem~5.10}]\label{lemma.K-int-L}
If $K$ and $L$ are subcontinua of~$\Hstar$ such that $K$~is a proper
subset of~$L$ and $L$~is indecomposable then there is a standard
subcontinuum~$M$ such that $K\subseteq M\subseteq L$. \qed  
\end{lemma}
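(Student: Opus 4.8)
The plan is to play the representation of~$K$ as an intersection of standard subcontinua against the dichotomy recorded in the remark following Lemma~\ref{lemma.comparable}. Since $K$ is a proper subcontinuum of~$\Hstar$ it is the intersection of a nonempty family~$\calS$ of standard subcontinua; each $S\in\calS$ then contains~$K$, so that $S\cap L\supseteq K\neq\emptyset$, and, $L$ being indecomposable, the remark yields for every $S\in\calS$ one of two situations: either $S\subseteq L$, or $L$ lies inside a single layer of~$S$ and so in particular $L\subseteq S$.

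First I would show that the first situation must occur for at least one $S\in\calS$. Indeed, if it were the second situation for every member of~$\calS$, we would have $L\subseteq S$ for all $S\in\calS$ and hence $L\subseteq\bigcap\calS=K$, contradicting the hypothesis that $K$ is a proper subset of~$L$. So fix $S_0\in\calS$ with $S_0\subseteq L$; then $K\subseteq S_0\subseteq L$ and $S_0$ is a standard subcontinuum, so $M=S_0$ does the job.

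The step that needs genuine care is the first one: it rests on the fact quoted above that every subcontinuum of~$\Hstar$ is an intersection of standard subcontinua, and one must invoke it correctly — using that $K$ is proper, so that the family is nonempty, and that each of its members genuinely contains~$K$ — this being where the substantive work of the preliminary section is concentrated. Granting that, the proof is just the two-line case split above, and the hypothesis that $K$ is a \emph{proper} subset of~$L$ is used exactly once, to eliminate the degenerate alternative. One could instead try to start from the representation of~$L$ as a \emph{union} of standard subcontinua, but then one would still have to trap~$K$ inside a single member of that family, which seems less immediate than the intersection argument.
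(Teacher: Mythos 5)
The paper gives no proof of this lemma: it is quoted, with its \qed, from \cite{Hart}*{Theorem~5.10}, so there is no in-paper argument to measure yours against. On its own terms your argument is correct: writing $K$ as the intersection of a nonempty family~$\calS$ of standard subcontinua (legitimate because $K$ is a proper subcontinuum, by the representation fact recorded in subsection~\ref{subsec:subcont}), noting that each $S\in\calS$ meets $L$ in at least~$K$, applying the dichotomy of Lemma~\ref{lemma.comparable} to each such~$S$, and observing that the alternative $L\subseteq S$ cannot hold for every $S$ because that would give $L\subseteq\bigcap\calS=K$, does produce a standard $S_0$ with $K\subseteq S_0\subseteq L$. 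The one point that carries real weight --- and you flag it yourself --- is that the intersection representation must be available \emph{independently} of the lemma being proved; otherwise the argument is circular, since for a $K$ sitting properly inside a nontrivial layer, the existence of standard subcontinua squeezed between $K$ and that layer is precisely what the lemma asserts. This is in fact unproblematic: in \cite{Hart} the representation comes directly from the interval-extraction property of~$\betaH$ (Proposition~3.2 there, the same device this paper uses in Section~\ref{sec.bell}) --- every neighbourhood of a proper subcontinuum contains a set $\Ex O$ with $O$ a union of separated intervals, and the trace of $\cl O$ on~$\Hstar$ splits into standard subcontinua, of which the connected set~$K$ lies in exactly one --- so it is established upstream of Theorem~5.10 and your proof stands. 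It is, in essence, the standard derivation of the result from comparability plus the intersection representation.
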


\section{Getting the continua}
\label{sec:getting}

In this section we describe a general construction of indecomposable
continua in~$\Hstar$; in the next section we show that we can actually find
$2^\cee$~many such continua.

We let $\Gamma$ denote the collection of all sequences $\abseq$
of closed intervals in~$\HH$ with integer end points and such that
$b_n=a_{n+1}$ for all~$n$.

As we have seen above, if $A=\abseq$ is such sequence then for every free
ultrafilter on~$\omega$ we obtain the standard subcontinuum~$[a_u,b_u]$. 

We can also associate an other subcontinuum to~$A$ and an ultrafilter~$u$,
as follows.
If $q$~is the map from $\M$ to~$\HH$ associated to~$A$ as above then the 
restriction $\beta q\restr\Mstar$ maps~$\Mstar$ onto~$\Hstar$.
Therefore there is an ultrafilter~$v$ on~$\omega$ such that $u\in[a_v,b_v]$;
this continuum we shall denote by~$A_u$.

Thus each ultrafilter $u$ determines a whole family of continua in~$\Hstar$,
to wit $\calS_u=\{A_u:A\in\Gamma\}$.

We shall find $2^\cee$ many ultrafilters on $\omega$ and for each
such ultrafilter~$u$ a chain~$\calC_u$ in $\calS_u$.
Each chain~$\calC_u$ gives us an indecomposable continuum, 
$K_u=\cl\bigcup\calC_u$, and our ulterior motive is to have all~$K_u$ be
mutually non-homeomorphic.

To this end we shall find for each linear order $\orpr T\prec$ of 
cardinality~$\aleph_1$ an ultrafilter~$u_T$, in fact a $P$-point,
such that $T$ embeds in~$\calS_{u_T}$ in a special way:
there will be a family~$\{A^t:t\in T\}$ in~$\Gamma$ such that
\begin{enumerate}
\item $t\prec s$ iff $A^t_{u_T}$ is contained in a layer of~$A^s_{u_T}$ 
\item every $A\in\Gamma$ is equivalent to some $A^t$, in a manner
      to be specified presently\label{embed.precise}
\end{enumerate}
These two conditions will ensure that a homeomorphism between~$K_{u_T}$
and~$K_{u_S}$ will give rise to an isomorphism between final segments
of~$T$ and~$S$.
Thus the proof will be finished once we exhibit $2^\cee$ many linearly ordered
sets without isomorphic final segments.

As mentioned before, the construction proceeds under the assumption
of the Continuum Hypothesis.

\subsection{\Goodt/s}

The central notion will be that of a \goodt/.%
\footnote{The word `good' seems overused and, especially in the vernacular,
          `bad' may carry a positive connotation}

A \goodt/ has three coordinates:
\begin{itemize}
\item a free filter base $\calF$ on $\omega$,
\item a linear order $\orpr T\prec$, and
\item a subset $\calA_T=\{A^t:t\in T\}$ of $\Gamma$.
\end{itemize}
These should satisfy the following properties, where, in the interest of
readability we write $\A tn$ for $[a^t_n,b^t_n]$.
\begin{enumerate}
\item if $s\prec t$ in~$T$ then there is $F\in\calF$ such that for every~$k$
      there is an~$l$ for which $\A sk\cap F\subseteq \A tl$
      \label{bad.tr.1}
\item for every decreasing sequence $\langle t_i:i<l\rangle$ in $T$,
      for every $m\in\omega$ and every $F\in\calF$ there is a 
      function $\varphi:\functions{\le l}m\to\omega$ such that
      \label{bad.tr.2}
      \begin{enumerate}
      \item if $\rho\in\functions lm$ then $\varphi(\rho)\in F$,
      \item if $\rho\in\functions{<l}m$ then $i\mapsto \varphi(\rho\concat i)$
            is increasing \label{bad.tr.2.a}
      \item if $k<l$ and $\rho\in\functions km$
            then $\bA{t_{k+1}}{\varphi(\rho\concat i)}\subseteq
                  \bA{t_k}{\varphi(\rho)}$
            for all $i<m$. 
      \end{enumerate}
\end{enumerate}
If $\calF$ is an ultrafilter then property~\eqref{bad.tr.1} translates into
$A^s_\calF\subseteq A^t_\calF$ and property~\eqref{bad.tr.2} implies that the 
inclusion is as described above: the (possibly partial) function 
$\psi$ that satisfies $\psi(k)=l$ iff $\A sk \subseteq\A tl$
is finite to one, but its fibers have unbounded cardinality, even
when restricted to an arbitrary element of~$\calF$ and this implies that
$A^s_\calF$ is a subset of a layer of~$A^t_\calF$.

Condition~\eqref{bad.tr.2} will also be seen to keep our recursive 
constructions alive.
To be able to keep our formulations readable we shall say that the
function~$\varphi$ in this condition
is $m$-dense for~$F$ and~$\langle t_i:i<l\rangle$,
or for~$F$ and~$\{t_i:i<l\}$ (set rather than sequence).
We shall abbreviate $\{\varphi(\rho):\rho\in\functions lm\}$ as~$\Im\varphi$
and refer to it as the image of~$\varphi$.

The following is a sketch of the construction.
Let $\orpr T\prec$ be a linear order of cardinality~$\aleph_1$ and let
$\omegaoneseq t$ be an enumeration of~$T$.
By transfinite recursion we construct a sequence $\omegaoneseq F$
of infinite subsets of~$\omega$ and a map $t\mapsto A^t$ from~$T$ to~$\Gamma$
such that
\begin{enumerate}
\item $F_\beta\subseteq^* F_\alpha$ whenever $\alpha<\beta$
\item $\trip{\calF_\alpha}{T_\alpha}{\calA_\alpha}$ is a \goodt/,
      where $\calF_\alpha=\{F_\beta:\beta<\alpha\}$,
      $T_\alpha=\{t_\beta:\beta<\alpha\}$, and
      $\calA_\alpha=\{A^{t_\beta}:\beta<\alpha\}$
\item $\{F_\alpha:\alpha\in\omega_1\}$ generates an ultrafilter on~$\omega$.
\end{enumerate}
For technical reasons we add a minimum and a maximum to~$T$, if not already 
present.

We will formulate and prove a series of lemmas about \goodt/s that
will facilitate such a construction; the standing assumptions 
in the lemmas will be 
\begin{enumerate}
\item $\calF$ and $T$ are countable, and $\calF$ extends the cofinite filter,
\item $T$~has a minimum and a maximum, denoted $0$ and $1$ respectively, and
\item $\abseq[0]=\bigl<[n,n+1]:n\in\omega\bigr>$.
\end{enumerate}

To begin we show that at any time during our construction we can assume
that $\calF$~is a principal filter, or rather, the restriction of the cofinite
filter to a single set.

\begin{lemma}\label{lemma:diagonal}
If $\trip{\calF}{T}{\calA_T}$ is a \goodt/ then there is a single infinite $G$
such that $G\subseteq^*F$ for all $F\in\calF$ and such that
$\btrip{\{G\}}{T}{\calA_T}$ is a \goodt/.
\end{lemma}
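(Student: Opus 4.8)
The plan is to build $G$ as a pseudo-intersection of $\calF$ while being careful enough that condition~\eqref{bad.tr.2} survives for the new triple $\btrip{\{G\}}{T}{\calA_T}$. Since $\calF$ is countable, fix an enumeration $\calF=\{F_i:i\in\omega\}$ and put $F'_i=F_0\cap\dots\cap F_i$, so $\langle F'_i:i\in\omega\rangle$ is a decreasing sequence generating the same filter. I will choose $G=\{g_n:n\in\omega\}$ with $g_0<g_1<\cdots$ and $g_n\in F'_n$, which immediately gives $G\subseteq^* F$ for every $F\in\calF$. The issue is that $\{G\}$ is now \emph{the} filter, so for \emph{every} decreasing sequence $\langle t_i:i<l\rangle$ in $T$, every $m$, and every ``$F\in\{G\}$'' (i.e.\ any cofinite-in-$G$ set, but it suffices to handle $G$ itself and then note the argument relativizes) we must produce an $m$-dense $\varphi$ whose top-level values land in $G$. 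Condition~\eqref{bad.tr.1} transfers for free: if $s\prec t$ and $F\in\calF$ witnesses it, then since $G\subseteq^* F$ and the finitely many exceptional intervals $\A sk$ with $\A sk\cap G\not\subseteq F$ can be absorbed by shrinking $G$ by a finite set at the start (or simply observing $\A sk\cap G\subseteq \A sk\cap F\subseteq\A tl$ holds for all but finitely many $k$, which is all that is needed after a finite modification of $G$), the required $F'\in\{G\}$ is $G$ minus a finite set, hence in the principal filter.

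For the transfer of~\eqref{bad.tr.2}, the key observation is that in the original good triple the function $\varphi$ witnessing $m$-density for $F$ and $\langle t_i:i<l\rangle$ can be found with $\Im\varphi$ contained in \emph{any} prescribed $F\in\calF$; in particular, running the original property with the filter element $F'_N$ (for $N$ large) yields a $\varphi$ with top-level values in $F'_N$. So the real task is: given $m$ and $\langle t_i:i<l\rangle$, arrange that I can hit $G$ at the top level. Here I use that the fibers of the comparison function have unbounded size along $\calF$ (the content of~\eqref{bad.tr.2}): I construct $G$ \emph{diagonally against all tasks}. Enumerate all triples $\tau=(m,\langle t_i:i<l\rangle, N)$ with $\langle t_i:i<l\rangle$ a decreasing sequence in $T$ (countably many, since $T$ is countable) so that each appears cofinally often, say as $\tau_n=(m_n,\vec t_n,N_n)$. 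At stage $n$, having chosen $g_0<\cdots<g_{n-1}$, apply the original~\eqref{bad.tr.2} to $m_n$, $\vec t_n$ and $F=F'_{N_n}\setminus(g_{n-1}+1)$ to get an $m_n$-dense $\varphi_n$ for $F$; then add \emph{all} of $\Im\varphi_n$ (a finite set, all of whose elements lie in $F'_{N_n}$ and exceed $g_{n-1}$) to $G$, updating the running maximum. Because each task reappears cofinally, $G$ contains, cofinally often, the full image of a suitable $m$-dense function for arbitrarily high $F'_N$; restricting that $\varphi_n$ appropriately (its top level already lies in $\Im\varphi_n\subseteq G$) gives the required $m$-dense function for $G$ and $\vec t_n$. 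Conditions~\eqref{bad.tr.2.a} and the nesting clause are structural properties of $\varphi_n$ itself and are inherited verbatim.

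Finally, one checks the standing bookkeeping: $G$ is infinite (each stage adds a nonempty finite block of new, larger elements), $G\subseteq^* F_i$ for each $i$ (for $n\ge i$ every element added at stage $n$ lies in $F'_n\subseteq F_i$, so only the finitely many elements from stages $<i$ can fail), and the two conditions of a good triple hold for $\btrip{\{G\}}{T}{\calA_T}$ as argued. The main obstacle is precisely the diagonalization in the previous paragraph: one must make sure that the finitely many values dumped into $G$ at each stage do not later destroy~\eqref{bad.tr.1} or the unboundedness needed for future instances of~\eqref{bad.tr.2}. This is handled by always working below the current maximum-plus-one intersected with a deep enough $F'_N$, so that the blocks are ``honest'' pieces of the filter and never interfere with the comparison structure of the intervals, which depends only on $\calA_T$ and not on $G$; and by noting that~\eqref{bad.tr.1}, being a statement about all-but-finitely-many $k$, tolerates the finite perturbations of $G$.
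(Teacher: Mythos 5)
Your construction is essentially the paper's own proof: there too, $G$ is taken to be the union of the (pairwise disjoint, successively larger) images of functions $\varphi_m$ chosen $m$-dense for $F_m$ and $T_m$, where $\langle F_m\rangle$ runs cofinally through $\calF$ and $\langle T_m\rangle$ exhausts $T$, and the verification uses exactly your observations that $m$-density is monotone in the filter element and that the tree-conditions are carried by $\varphi_m$ itself. One small repair: at stage $n$ you should put the new block inside $F'_{\max(n,N_n)}\setminus(g_{n-1}+1)$ (or an element of $\calF$ below it) rather than $F'_{N_n}\setminus(g_{n-1}+1)$, since a task with small $N_n$ recurs at arbitrarily large $n$ and would otherwise add elements lying outside $F'_n$, contradicting your final check that stage-$n$ elements lie in $F_i$ for $n\ge i$.
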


\begin{proof}
Let $\omegaseq{T}$ be an increasing sequence of finite sets whose union
is~$T$ and let $\omegaseq F$ be a sequence in~$\calF$ such that for every
$F\in\calF$ there is an~$n$ such that $F_n\subseteq F$.
Recursively let $\varphi_m$ be $m$-dense for~$F_m$ and~$T_m$ 
and such that $\Im\varphi_m$~is disjoint from~$\Im\varphi_i$ for $i<m$.
Then $G=\bigcup_{m\in\omega}\Im\varphi_m$ is as required.
\end{proof}

This lemma is used at limit steps of our construction, basically to make
them look like successor steps.
We shall write $\trip{G}{T}{\calA_T}$ for $\btrip{\{G\}}{T}{\calA_T}$.

At some steps in the construction the following technical fact will be useful.

\begin{lemma}\label{lemma.disjoint.dense}
A triple $\trip{F}{T}{\calA_T}$ is \good/ if and only if for every (some)
increasing sequence~$\omegaseq m$ in~$\omega$ and every (some) increasing
sequence~$\omegaseq T$ finite subsets of~$T$ such that 
$T=\bigcup_{n\in\omega}T_n$
there is a sequence $\omegaseq\varphi$ of functions such that
$\varphi_n$~is $m_n$-dense for $F$ and~$T_n$, 
and $\max\Im\varphi_n<\min\Im\varphi_{n+1}$ for all~$n$.
\end{lemma}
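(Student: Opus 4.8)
The plan is to unwind the definition of \goodt/ and reorganize the families of \good/ witness functions so that their images come in disjoint blocks arranged along a fixed increasing sequence of index sets. The statement is essentially a cosmetic strengthening of condition~\eqref{bad.tr.2}: the extra content is (a) that one gets the same conclusion for \emph{every} choice of exhausting sequences $\omegaseq m$ and $\omegaseq T$ as for \emph{some} choice, and (b) that the images $\Im\varphi_n$ can be made to march off to infinity in strictly separated blocks. The separation condition $\max\Im\varphi_n<\min\Im\varphi_{n+1}$ is exactly the kind of bookkeeping we already performed in the proof of Lemma~\ref{lemma:diagonal}, so the mechanism is available.

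First I would prove the ``some implies \good/'' direction, since it is the one that has genuine content: assume we are handed one increasing exhausting sequence $\omegaseq T$ of finite subsets of $T$, one increasing $\omegaseq m$ in $\omega$, and functions $\varphi_n$ that are $m_n$-dense for $F$ and $T_n$ with $\max\Im\varphi_n<\min\Im\varphi_{n+1}$. I must verify conditions~\eqref{bad.tr.1} and~\eqref{bad.tr.2} of the definition of \goodt/. For~\eqref{bad.tr.1}, given $s\prec t$ in $T$, pick $n$ with $\{s,t\}\subseteq T_n$; applying $1$-density of $\varphi_n$ (using the chain $\langle t,s\rangle$ when $s\prec t$, so clause (c) gives $\A s{\varphi_n(\rho\concat i)}\subseteq\A t{\varphi_n(\rho)}$) produces, for each $k$, an $l$ with $\A sk\cap F\subseteq \A tl$ after intersecting with the appropriate member of $\calF=\{F\}$ — here I would spell out that $m_n$-density for large enough $n$ gives cofinally many indices. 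For~\eqref{bad.tr.2}, given a decreasing sequence $\langle t_i:i<l\rangle$, an $m$, and (the only) $F$, choose $n$ large enough that $\{t_i:i<l\}\subseteq T_n$ and $m_n\ge m$; then restrict $\varphi_n$ to $\functions{\le l}m$ (composing with inclusions $m\hookrightarrow m_n$ if needed, or just noting $l$ may be read off from the length) to obtain the required $\varphi$. The clauses (a), (b), (c) are inherited verbatim from $m_n$-density of $\varphi_n$.

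Next, for the ``\good/ implies every'' direction, assume the triple is \good/ and fix an \emph{arbitrary} increasing exhausting sequence $\omegaseq T$ and increasing $\omegaseq m$. I would build the $\varphi_n$ recursively, exactly as in Lemma~\ref{lemma:diagonal}: having chosen $\varphi_0,\dots,\varphi_{n-1}$, let $N=\max\bigl(\Im\varphi_0\cup\cdots\cup\Im\varphi_{n-1}\bigr)$ and apply condition~\eqref{bad.tr.2} of the \goodt/ definition with the set $T_n$ (listed as a decreasing sequence under $\prec$, padding with repetitions or reindexing so that the ``decreasing sequence'' hypothesis of~\eqref{bad.tr.2} is met — this is the one slightly delicate point, see below), with $m=m_n$, and with the member $F'=F\cap(N,\omega)$ of $\calF$ in place of $F$; by clause (a) this forces $\Im\varphi_n\subseteq(N,\omega)$, hence $\min\Im\varphi_n>N\ge\max\Im\varphi_{n-1}$, giving the block-separation condition. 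Finally I would note that if the ``single $F$'' form of the lemma has been invoked then $\calF=\{F\}$ is closed under the superset $F\cap(N,\omega)$? — it is not, so here I would instead absorb the tail restriction into the construction directly, replacing $F$ by $F\setminus N$ and observing that density witnesses for $F\setminus N$ are a fortiori density witnesses for $F$, which is what the conclusion asks for.

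The main obstacle I anticipate is purely organizational: condition~\eqref{bad.tr.2} is phrased for a \emph{decreasing sequence} $\langle t_i:i<l\rangle$ in $T$, whereas $T_n$ is an arbitrary finite subset of a linear order, so I need to enumerate $T_n$ in decreasing order and check that the density function so obtained controls \emph{all} pairs $s\prec t$ in $T_n$ via the nesting clause (c) — in particular that the intermediate elements do not interfere. This is where I would lean on Lemma~\ref{lemma.comparable} and the remark after it (the dichotomy ``$K\subseteq L$ and nowhere dense, or $L$ inside a layer of $K$''), together with condition~\eqref{bad.tr.1} of \goodt/, to see that the relevant inclusions $\A tk$ among the $A^t$ for $t\in T_n$ are coherent, so that a single $\varphi$ that is $m_n$-dense for the decreasing enumeration of $T_n$ automatically witnesses the required density for every subinterval. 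Once that coherence is isolated as a sub-claim, the rest is the routine disjointification already rehearsed in Lemma~\ref{lemma:diagonal}, and I would present it tersely.
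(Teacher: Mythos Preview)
Your recursion for the non-trivial direction (``\good/ implies every'') has a real gap at exactly the point you flag. You are right that $F\cap(N,\omega)$ need not lie in~$\calF=\{F\}$, so condition~\eqref{bad.tr.2} cannot be invoked for it; but your patch (``density witnesses for $F\setminus N$ are a fortiori density witnesses for~$F$'') runs the wrong way. The conclusion requires $\Im\varphi_{n+1}\subseteq F$ \emph{and} $\min\Im\varphi_{n+1}>N$; the hypothesis hands you only a~$\varphi$ with $\Im\varphi\subseteq F$, and nothing in your argument forces those values above~$N$. Observing that a hypothetical witness for $F\setminus N$ would also serve for~$F$ does not manufacture such a witness.

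The missing idea, and what the paper actually does, is to exploit the monotonicity clause~\eqref{bad.tr.2.a} rather than shrink~$F$. With $M=\max\Im\varphi_n$, apply condition~\eqref{bad.tr.2} with the \emph{same}~$F$ but with branching $M+m_{n+1}+1$ to obtain~$\varphi$, and then set $\varphi_{n+1}(\rho)=\varphi(M+1+\rho)$, i.e.\ add $M+1$ to every coordinate of~$\rho$. Since $i\mapsto\varphi(\sigma\concat i)$ is a strictly increasing map into~$\omega$, one has $\varphi(\sigma\concat j)\ge j$ for every~$j$; at the leaf level the last coordinate is at least $M+1$, so every value of~$\varphi_{n+1}$ exceeds~$M$ and the block separation follows without ever leaving~$\calF$. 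As a side remark: the paper treats the converse implication as trivial and omits it; your attempt to recover condition~\eqref{bad.tr.1} from the~$\varphi_n$'s does not work as written (the density functions supply specific nested indices, not the ``for every~$k$'' clause), and appealing to the topological Lemma~\ref{lemma.comparable} in what is a purely combinatorial bookkeeping verification is out of place.
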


\begin{proof}
For the non-trivial implication we find the functions $\varphi_n$
by recursion: $\varphi_0$~exists by assumption
and if $\varphi_n$ is found then we let $M=\max\Im\varphi_n$ and we choose
a function~$\varphi$ that is $M+m_{n+1}+1$-dense for $F$ and~$T_{n+1}$.
By condition~\eqref{bad.tr.2.a} in the definition of a \goodt/ we have
$\varphi(M+1+\rho)>M$ 
whenever $\rho\in\functions i{m_{n+1}}$ for some~$i\le|T_{n+1}|$
(here $M+1+\rho$ denotes the sequence obtained by adding $M+1$ to
all values of~$\rho$).
Thus defining $\varphi_{n+1}(\rho)=\varphi(M+1+\rho)$ gives us our next
function.
\end{proof}

The next lemma ensures that we can make our final filter an ultrafilter.

\begin{lemma}\label{lemma.split}
Let $\trip{F}{T}{\calA_T}$ be a \goodt/ and assume $F=F_0\cup F_1$;
then at least one of $\trip{F_0}{T}{\calA_T}$ and $\trip{F_1}{T}{\calA_T}$
is a \goodt/.
\end{lemma}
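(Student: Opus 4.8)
Here is how I would approach Lemma~\ref{lemma.split}.

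The plan is to reduce the assertion to preserving property~\eqref{bad.tr.2} and then to check that property through Lemma~\ref{lemma.disjoint.dense}. Two easy observations clear the ground. First, we may assume $F_0$ and~$F_1$ are both infinite, for if, say, $F_1$ is finite then $F_0$ contains a cofinite subset of~$F$ and $\trip{F_0}{T}{\calA_T}$ is at once seen to be a \goodt/. Second, property~\eqref{bad.tr.1} is inherited, for free, by \emph{both} $\trip{F_0}{T}{\calA_T}$ and $\trip{F_1}{T}{\calA_T}$: an $F'\in\calF$ witnessing~\eqref{bad.tr.1} for a pair $s\prec t$ contains a cofinite subset of~$F$, hence of~$F_i$, so it already belongs to the filter generated by~$F_i$ and still witnesses~\eqref{bad.tr.1} there. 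Thus the real issue is that at least one of $F_0$, $F_1$ keeps property~\eqref{bad.tr.2}.

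To this end I would fix an increasing sequence $\omegaseq T$ of finite subsets of~$T$ with $\bigcup_nT_n=T$, put $m_n=2n+2$, and apply Lemma~\ref{lemma.disjoint.dense} to the \goodt/ $\trip{F}{T}{\calA_T}$: this hands us functions~$\varphi_n$, each $m_n$-dense for~$F$ and~$T_n$, with $\max\Im\varphi_n<\min\Im\varphi_{n+1}$ for all~$n$. Now thin each~$\varphi_n$: colour a leaf of its tree by whichever of~$F_0$, $F_1$ contains the $\varphi_n$-value at that leaf, and then work from the leaves upward, at each node keeping $\lceil m_n/2\rceil=n+1$ of its $m_n$ immediate successors of one common colour and giving the node that colour. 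All leaves of the resulting sub-tree carry a single colour~$c_n\in\{0,1\}$, and relabelling the retained successors from left to right turns the restriction of~$\varphi_n$ to this sub-tree into a function~$\psi_n$ that is $(n+1)$-dense for~$F_{c_n}$ and~$T_n$ and satisfies $\Im\psi_n\subseteq\Im\varphi_n$.

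Finally, pick $c^*\in\{0,1\}$ with $c_n=c^*$ for all~$n$ in some infinite set~$N$. Along~$N$ the sets~$T_n$ still increase with union~$T$, the numbers~$n+1$ still increase, every~$\psi_n$ has image inside~$F_{c^*}$, and $\max\Im\psi_n<\min\Im\psi_{n'}$ whenever $n<n'$ in~$N$ because $\Im\psi_n\subseteq\Im\varphi_n$. Re-indexing~$N$ as~$\omega$ and invoking the converse implication of Lemma~\ref{lemma.disjoint.dense} then shows that $\trip{F_{c^*}}{T}{\calA_T}$ is a \goodt/, as desired.

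The step that I expect to take the most care is the thinning: one must verify that keeping $n+1$ successors at each node of a $\varphi_n$-tree and relabelling them really does produce an $(n+1)$-dense function. The nesting clause in~\eqref{bad.tr.2} survives by transitivity of~$\subseteq$, but for the monotonicity clause~\eqref{bad.tr.2.a} one uses that for a fixed~$t\in T$ all the intervals~$\A tk$ come from the one sequence~$A^t\in\Gamma$, so that among these intervals ``lying to the left'' is the same as ``having a smaller index''; since the retained successors are kept in their original order and the nesting keeps their associated intervals pairwise disjoint, the corresponding $\varphi_n$-values stay in increasing order. Beyond that, the only point with any content is the observation that, since Lemma~\ref{lemma.disjoint.dense} only demands dense functions along \emph{some} increasing chain of finite subsets of~$T$, it is enough to stabilise the colour~$c_n$ on an infinite set of~$n$ rather than on all~$n$ --- which is precisely what a plain two-colour pigeon-hole provides.
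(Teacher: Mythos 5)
Your proof is correct and follows essentially the same route as the paper's: a pigeonhole halving of a $2m$-dense tree, carried out level by level from the leaves up (the paper phrases this as an induction on the length of the decreasing chain), followed by choosing the colour that occurs for infinitely many~$m$. The explicit detour through Lemma~\ref{lemma.disjoint.dense} and the leaf-colouring language are only cosmetic differences.
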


\begin{proof}
We show by induction on $l$: if $\langle t_i:i<l\rangle$ is decreasing
and $\varphi$~is $2m$-dense for~$F$ and~$\langle t_i:i<l\rangle$ 
then $\varphi$ induces an $m$-dense function for~$F_0$ or~$F_1$
and~$\langle t_i:i<l\rangle$.

If $l=1$ then $\Im\varphi$ is just a $2m$-element subset of~$F$ and its 
intersection with one of~$F_0$ and~$F_1$ has at least $m$~elements;
the increasing enumeration of that intersection is $m$-dense.

In the step from $l$ to $l+1$ we let $\langle t_i:i\le l\rangle$ 
and a $2m$-dense $\varphi$ be given.
For each $j<2m$ the function $\varphi_j:\functions{\le l}{2m}\to\omega$,
defined by $\varphi_j(\rho)=\varphi(j\concat\rho)$, is $2m$-dense
for~$F$ and~$\langle t_i:1\le i\le l\rangle$ and so induces an $m$-dense
function~$\varphi_j'$ for $F_{\epsilon_j}$ and~$\langle t_i:1\le i\le l\rangle$, 
where $\epsilon_j\in\{0,1\}$.
Take $\epsilon$ such that $A=\{j:\epsilon_j=\epsilon\}$ has size at least~$m$
and define $\varphi':\functions{\le l+1}m\to\omega$ by 
`$\varphi'(\langle j\rangle)$~is the $j$th element of~$A$'
and $\varphi'(j\concat\rho)=\varphi'_{\varphi'(\langle j\rangle)}(\rho)$ 
for $\rho\in\functions{\le l}m$.

Now enumerate $T$ as $\omegaseq t$ and apply the above for each $m$ to the pair 
$\langle t_i:i<m\rangle$ and~$m$.
Whichever of $F_0$ and $F_1$ appears infinitely often in the conclusion
is the set that we seek.
\end{proof}

Now we show how to extend the ordered set $T$ by one element.

\begin{lemma}
Let $\trip{F}{T}{\calA_T}$ be a \goodt/ and let $t^*$ be a point not in~$T$.
Assume $T\cup\{t^*\}$~is ordered so that $T$~retains its original order and
$0\prec t^*\prec1$.
Then there are $G\subseteq F$ and $A^{t^*}\in\Gamma$ such that
$\btrip{G}{\calA_T\cup\{A^{t^*}\}}{T\cup\{t^*\}}$ is a \goodt/.  
\end{lemma}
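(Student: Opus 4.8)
The plan is to insert the new point $t^*$ between its immediate predecessor and successor in $T$ and to build $A^{t^*}$ by ``interpolating'' between the existing sequences, using the density functions $\varphi$ from condition~\eqref{bad.tr.2} as the engine. Let $s_0$ be the maximum of $\{s\in T: s\prec t^*\}$ and $s_1$ the minimum of $\{s\in T: t^*\prec s\}$ (these exist because $T$ is finite and has endpoints). The intended behaviour is that $\A{s_0}{k}$ sits inside some $\A{t^*}{l}$ and each $\A{t^*}{l}$ sits inside some $\A{s_1}{m}$, with the fibers of the induced functions $T\to\omega$ being finite but of unbounded size on every $F'\in\calF$, so that \eqref{bad.tr.1} and \eqref{bad.tr.2} hold for every pair that straddles $t^*$. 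For pairs $s\prec t$ both in $T$ nothing changes, so those conditions are inherited; for pairs with one coordinate equal to $t^*$ we must verify them afresh, and this is where the construction is concentrated.

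First I would use Lemma~\ref{lemma:diagonal} to replace $\calF$ by a single set $F$, so we may assume the filter is $\{F\}$ (the statement of the lemma is already phrased this way, so this is just the standing convention). Next, invoke \eqref{bad.tr.1} for the pair $s_0\prec s_1$ to get, after shrinking $F$, a function $k\mapsto l(k)$ with $\A{s_0}{k}\cap F\subseteq\A{s_1}{l(k)}$; the image intervals $\A{s_1}{l(k)}$ (for $k$ ranging over a cofinite set) are then a chain of closed intervals in $\HH$ that I will subdivide. The idea is to define $A^{t^*}$ so that its $n$th interval $\A{t^*}{n}$ is a block of consecutive $\A{s_0}{k}$'s, grouped in blocks of geometrically increasing length, all landing inside a single $\A{s_1}{\cdot}$-interval; this forces the $s_0$-to-$t^*$ function to be finite-to-one with unbounded fibers and the $t^*$-to-$s_1$ function likewise, which is exactly what the ``layer'' interpretation after \eqref{bad.tr.2} demands. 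The real work is checking \eqref{bad.tr.2}: given a decreasing sequence in $T\cup\{t^*\}$ containing $t^*$, an $m$, and an $F'\subseteq F$, I must produce an $m$-dense $\varphi$. Here I would split the decreasing sequence at $t^*$, apply the inductive density hypotheses for the two subsequences (below $t^*$ using $s_0$'s data, above $t^*$ using $s_1$'s data), and then splice the two partial functions together along the block structure of $A^{t^*}$; Lemma~\ref{lemma.disjoint.dense} will let me arrange the images to be consecutively ordered, which is what makes the splice work.

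The main obstacle I expect is the simultaneous bookkeeping in the splicing step: I need the blocks defining $A^{t^*}$ to be coarse enough that, restricted to any $F'\in\calF$, each block still contains an $m$-dense worth of $\A{s_0}{k}$-intervals witnessing inclusions into $\A{s_1}{\cdot}$, yet fine enough that the $t^*$-to-$s_1$ fibers are also unbounded on every $F'$. Balancing these two ``unbounded on every $F'$'' requirements against each other — essentially a diagonalization over the countably many future $F'$ — is the delicate point, but condition~\eqref{bad.tr.2} for $s_0$ is precisely strong enough to feed such a recursion, and Lemma~\ref{lemma.disjoint.dense} packages exactly the ``disjoint consecutive images'' form we need to interleave the two sides. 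Once $A^{t^*}$ and the final $G\subseteq F$ are fixed, verifying that $\btrip{G}{\calA_T\cup\{A^{t^*}\}}{T\cup\{t^*\}}$ is a \goodt/ reduces to the straaddling pairs, and the two cases ($t^*$ as the larger or the smaller coordinate) are symmetric enough that one computation with the block structure settles both.
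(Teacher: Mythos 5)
There is a genuine gap, and it starts with the sentence ``these exist because $T$ is finite and has endpoints.'' The standing assumption is that $T$ is \emph{countable}, not finite, so the set $\{s\in T:s\prec t^*\}$ need not have a maximum and $\{s\in T:t^*\prec s\}$ need not have a minimum: $t^*$ may be inserted at a gap or at a limit position of $T$. Your whole construction of $A^{t^*}$ is an interpolation between two fixed neighbours $s_0\prec t^*\prec s_1$, so it does not get off the ground in the general case. The paper's proof handles exactly this by writing $T=\bigcup_m T_m$ with $T_m$ finite and, for each $m$ separately, locating the immediate neighbours $t_{i+1}\prec t^*\prec t_i$ \emph{inside $T_m$}; these neighbours change with $m$, and the $m$th batch of intervals of $A^{t^*}$ is built as hulls of blocks of $m$ consecutive intervals of $A^{t_{i+1}}$ for the \emph{current} $t_{i+1}$. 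Nothing in your write-up addresses how to define $A^{t^*}$ when the relevant neighbours drift as more of $T$ is revealed.

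The second problem is the splicing step, which you correctly flag as the delicate point but do not resolve, and as described it would fail. If you independently produce an $m$-dense function for the part of the decreasing sequence above $t^*$ and another for the part below, there is no reason the intervals chosen by the lower function should sit inside the specific intervals $\bA{t_i}{\varphi(\rho)}$ selected by the upper one, which is what condition~\eqref{bad.tr.2} demands across the splice. The paper avoids this entirely: it takes a \emph{single} $m^2$-dense function $\varphi_m$ for $F$ and $T_m$ and re-indexes its domain, converting the one $m^2$-branching level at the position of $t_{i+1}$ into two $m$-branching levels (one for $t^*$, one for $t_{i+1}$); all nesting relations are then inherited from the one coherent function, and no balancing of two independent constructions is needed. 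Your instinct to lean on Lemma~\ref{lemma.disjoint.dense} and on the surplus branching in condition~\eqref{bad.tr.2} is right, but the specific mechanism --- one function with quadratic branching split into two levels, applied stage by stage to the finite approximations $T_m$ --- is the missing idea.
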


\begin{proof}
We write $T$ as an increasing union of finite sets $T_m$, with $0,1\in T_0$
and we construct $G$ and $A^{t^*}$ as follows.
We apply Lemma~\ref{lemma.disjoint.dense} to find a sequence
$\omegaseq[m]\varphi$ such that $\varphi_m$ is $m^2$-dense for $F$ and~$T_m$,
and $\max\Im\varphi_m<\min\Im\varphi_{m+1}$ for all~$m$.

We fix $m$ for the moment and let $\langle t_i:i<l\rangle$ enumerate~$T_m$
in decreasing order and let $i$ be such that $t_{i+1}\prec t^*\prec t_i$.
Our task is to convert $\varphi_m$ into an $m$-dense function for 
our future~$G$ and $T_m\cup\{t^*\}$.
The idea is simple
--- we use level~$i+1$ in~$\dom\varphi_m$ to create
 two levels in $\dom\psi_m$ --- but the notation is a bit messy: 
we take the following subset of the domain of~$\varphi_m$:
$$
D=\{\rho\in\dom\varphi_m:
    (\forall j\in\dom\rho)(j\neq i \Rightarrow \rho(j)<m)\}
$$
Using the $m^2$ values for all~$\rho(i)$ we transform $D$ into the tree 
$\functions{\le l+1}m$:
\begin{itemize} 
\item if $\dom\rho\le i$ then $\rho$ does not change;
\item if $\dom\rho=i+1$ then $\rho=\rho'\concat(km+ j)$ for some
  $\rho'\in\functions im$ and $k,j<m$; in this case $\rho$ determines two
  nodes: $\rho^+=\rho'\concat k$ and $\rho^{++}=\rho'\concat k\concat j$
\item if $i+1<\dom\rho$ then $\rho=\rho'\concat(km+ j)\concat\sigma$ for
  some $\rho'\in\functions im$, some $k,j<m$ and some sequence $\sigma$;
  then $\rho$ determines $\rho^+=\rho'\concat k\concat j\concat\sigma$.
\end{itemize}
We define $\psi_m:\functions{\le l+1}m\to\omega$ by 
$$
\psi_m(\varrho)=
\begin{cases}
\varphi_m(\varrho) & \text{ if }\dom\varrho\le i\\
\varphi_m(\rho) & \text{ if }\varrho=\rho^{++} \text { for some }
                  \rho\in\functions{i+1}m\\
\varphi_m(\rho) & \text{ if }\varrho=\rho^+ \text { for some }
                  \rho \text{ with }\dom\rho>i+1\\
\end{cases}
$$
This leaves $\psi_m(\varrho)$ undefined in case $\dom\varrho=i+1$,
that is, if $\varrho=\rho\concat k$ for some $\rho\in\functions im$
and $k<m$,
and it is here that we build and insert part of $A^{t^*}$.

In words: for each $\rho\in\functions im$ we bundle the $m^2$ intervals 
$[a^{t_{i+1}}_{\varphi_m(\rho\concat j)},
  b^{t_{i+1}}_{\varphi_m(\rho\concat j)}]$
into groups of~$m$ consecutive ones and for each group take the smallest
interval that surrounds its members.

In symbols:
for each $k<m$ the interval 
$[a^{t_{i+1}}_{\varphi_m(\rho\concat (km))},
  b^{t_{i+1}}_{\varphi_m(\rho\concat ((k+1)m-1))}]$ will be a term
of~$A^{t^*}$ and its index will be the value of~$\psi_m$ at $\rho\concat k$.

We also add $\Im\psi_m$ to $G$ and in this way ensure that $\psi_m$
will be $m$-dense for $G$ and $T_m\cup\{t^*\}$.
\end{proof}

We now turn to the task of avoiding having to add points to our linear order
when we do not want to, that is, we want ensure that we can 
achieve property~\eqref{embed.precise} (on page~\pageref{embed.precise})
of the embedding.
It is here that we define the notion of equivalence, promised in that
property.

We introduce some notation:
let $F\subseteq\omega$ and let $A,B\in\Gamma$.

We say that $A$ refines $B$ modulo~$F$, and we write $A\preceq_F B$,
if for every term of~$A$ with $[a,b]\cap F\neq\emptyset$
there is a term $[c,d]$ of~$B$ such that $[a,b]\cap F\subseteq[c,d]$

We say that $A$ and $B$ are equivalent modulo~$F$, written $A\equiv_FB$,
if for every $n\in F$ there are terms $[a,b]$ of~$A$ and $[c,d]$ of~$B$
such that $n\in[a,b]\cap F$ and $[a,b]\cap F=[c,d]\cap F$.

\begin{lemma}\label{lemma.compare}
Let $\trip{F}{T}{\calA_T}$ be a \goodt/, let $t\in T$ and $A\in\Gamma$.
Then there is $F_t\subseteq F$ such that $\trip{F_t}{T}{\calA_T}$
is a \goodt/ and $A\preceq_{F_t} A^t$ or $A^t\preceq_{F_t} A$; in addition
if $t$~has a direct $\prec$-predecessor~$s$ then we can even achieve 
``$A\preceq_{F_t} A^s$ or $A^t\preceq_{F_t} A$''.
\end{lemma}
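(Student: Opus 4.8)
Here $\calF$ is already the single infinite set~$F$, so there is nothing to diagonalise; discard the finitely many points of~$F$ lying in no term of~$A$ or of~$A^t$, and split~$F$ once by Lemma~\ref{lemma.split} — treating separately, by an easy variant of what follows, the case in which we are forced onto the set of endpoints of the terms of~$A$ and~$A^t$ — so that~$F$ may be assumed to contain no such endpoint. For $n\in F$ let $I(n)$ and $J(n)$ be the terms of~$A$, respectively of~$A^t$, that contain~$n$ in their interiors. Two such intervals are either nested or cross, and a crossing has a well-defined left and right end, so I can colour the points of~$F$ with four colours: (i)~$J(n)\subseteq I(n)$; (ii)~$I(n)\subseteq J(n)$; (iii)~$J(n)$ crosses~$I(n)$ and sticks out on the left; (iv)~$J(n)$ crosses~$I(n)$ and sticks out on the right. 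Iterating Lemma~\ref{lemma.split} over this finite partition I pass to an infinite $F'\subseteq F$, monochromatic for the colouring, with $\trip{F'}{T}{\calA_T}$ still a \goodt/.

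Colours (i) and (ii) give the conclusion immediately: a term of~$A^t$ meeting~$F'$ contains some $n\in F'$ in its interior, hence equals $J(n)\subseteq I(n)$, so $A^t\preceq_{F'}A$, and colour~(ii) gives $A\preceq_{F'}A^t$ by symmetry. For the crossing colours I use the following observation: if $F'$ has colour~(iv) then for every $n\in F'$ the left end of~$J(n)$ lies strictly to the right of the left end of~$I(n)$, so given a term~$[c,d]$ of~$A^t$ meeting~$F'$ every $n\in[c,d]\cap F'$ has its $A$-term beginning strictly before~$c$; hence there is no endpoint of~$A$ in $[c,\max([c,d]\cap F')]$, and the $A$-term containing $\min([c,d]\cap F')$ already contains $[c,d]\cap F'$. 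Thus colour~(iv) gives $A^t\preceq_{F'}A$, and the mirror argument turns colour~(iii) into $A\preceq_{F'}A^t$. In every case $F_t=F'$ works, which proves the first assertion.

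For the additional clause let $s$ be the immediate $\prec$-predecessor of~$t$. Applying the first part to compare $A$ with~$A^t$ either gives $A^t\preceq_{F_t}A$ (and we are done) or gives $A\preceq_{F'}A^t$ on an~$F'$ with $\trip{F'}{T}{\calA_T}$ a \goodt/; in the latter case, applying the first part again inside~$F'$ to compare $A$ with~$A^s$ either gives $A\preceq_{G}A^s$ (done) or leaves only the configuration $A^s\preceq_{G}A\preceq_{G}A^t$ on a $G\subseteq F'$ with $\trip{G}{T}{\calA_T}$ a \goodt/. In this last case I would collapse $A$ onto one of $A^s$,~$A^t$: according as the terms of~$A^t$ contain, within~$G$, boundedly or unboundedly many terms of~$A$, I thin~$G$ so that within each term of~$A^t$ only the $G$-part of a single term of~$A$ survives (yielding $A^t\preceq_{F_t}A$), or so that within each term of~$A$ only the $G$-part of a single term of~$A^s$ survives (yielding $A\preceq_{F_t}A^s$); which term is retained inside each term of~$A^t$ is decided by dovetailing over the countably many instances of condition~\eqref{bad.tr.2} of the definition of a \goodt/, one for each finite decreasing chain in~$T$, so that the monotone, unboundedly branching systems of nested intervals demanded by~\eqref{bad.tr.2} can still be found inside~$F_t$. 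Immediacy of~$s$ enters exactly here: a finite decreasing chain in~$T$ starting at~$t$ has its second term~$\preceq s$, and since $A^r\preceq_{G}A^s\preceq_{G}A$ whenever $r\preceq s$, the part of that chain's interval system below its top refines~$A$ and hence can be fitted inside a single term of~$A$; a member of~$\calA_T$ lying strictly between $A^s$ and~$A^t$ with $A$ refining it would instead produce a chain whose system cannot be squeezed into one term of~$A$, so that $\trip{F_t}{T}{\calA_T}$ would fail to be a \goodt/ after the collapse. I expect this collapsing step to be the one genuinely delicate point — the bookkeeping that the thinned~$F_t$ still makes $\trip{F_t}{T}{\calA_T}$ a \goodt/, balanced against the $G$-content that the collapse discards — and it is there, and only there, that one really uses that $s$ is the \emph{immediate} predecessor of~$t$; the rest (the finite-partition form of Lemma~\ref{lemma.split}, the four-colour dichotomy, the endpoint-crossing argument) should be routine.
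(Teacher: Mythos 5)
Your four\-/colour argument for the first assertion is correct and genuinely different from the paper's: after handling the (finitely or infinitely many) endpoints, making $F'$ monochromatic for the nested/crossing\-/left/crossing\-/right relation between the $A$-term and the $A^t$-term through each point does yield one of the two refinement relations on $F'$, and Lemma~\ref{lemma.split} iterated over the finite partition keeps the triple \good/. The crossing cases work for the reason you give: a uniform direction of overhang forbids an endpoint of the other sequence in the relevant stretch of~$F'$. The paper instead obtains both assertions from one construction: for each~$m$ it asks, node by node in an $(m+1)(m+2)$-dense tree~$\varphi_m$, whether at least $m$ of the $A^{s_m}$-intervals below a node fit into a single term of~$A$, uniformizes the answer along the tree via the proof of Lemma~\ref{lemma.split}, and in the negative case keeps only $A^{s_m}$-intervals spaced $m+2$ apart, so that a term of~$A$ can meet at most one of them --- which delivers $A\preceq_{F_t}A^{s}$ directly.

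The genuine gap is the ``collapse'' step for the additional clause, and it is not bookkeeping but the actual content of the lemma (it is the clause that Lemma~\ref{lemma.insert} needs). Two problems. First, the dichotomy you collapse along --- whether terms of~$A^t$ contain boundedly or unboundedly many terms of~$A$ within~$G$ --- is the wrong one: what matters is how the particular intervals $\bA{s}{\varphi_m(\rho\concat j)}$ selected by a density function distribute among the terms of~$A$, and this must be decided per node~$\rho$ and then made coherent along the tree; an $A^t$-term can contain unboundedly many $A$-terms within~$G$ while all of the $\varphi_m$-selected $A^{s}$-subintervals below a given node sit inside one of them, and conversely. Second, ``keeping one $A^s$-term per $A$-term'' (or one $A$-term per $A^t$-term) must preserve condition~\eqref{bad.tr.2} for every finite decreasing chain and every~$m$ simultaneously, and a particular retained $A^s$-term need not contain the required $m$-branching system for the levels below~$s$ --- \good/ness only guarantees that \emph{some} $m$ of the candidates do, so the retention choice at level~$s$ interacts with all lower levels. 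Your dovetailing sentence asserts that these selections can be harmonized but supplies no mechanism; producing one essentially forces you back to the paper's counting argument ($R_m$ versus $S_m$, the $(m+1)(m+2)$ calibration, and assembling $F_t$ from pieces certified by Lemma~\ref{lemma.disjoint.dense}). So the first assertion is proved by a nice alternative route, but the ``in addition'' clause is not established.
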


\begin{proof}
Write $T$ as the union of an increasing sequence~$\omegaseq[m]T$ of finite
sets such that $0,1,t\in T_0$ (and also $s\in T_0$ if present). 
Upon applying Lemmas~\ref{lemma.split} and~\ref{lemma.disjoint.dense}
we may assume that $F$~does not meet consecutive intervals of~$A^t$,
and that we have a sequence~$\omegaseq[m]\varphi$ of functions
such that $\varphi(m)$ is $(m+1)(m+2)$-dense for~$F$ and~$T_m$, and
$\max\Im\varphi_m<\min\Im\varphi_{m+1}$ for all~$m$.
We also assume $Y=\bigcup_{m\in\omega}\Im\varphi_m$.

Enumerate $T_m$ in decreasing order as $\langle t^m_i<l_m\rangle$, and
for every~$m$ let $i_m$ be the index of~$t$.
Abbreviate $t^m_{i_m}$ as~$t_m$ and $t^m_{i_m+1}$ as~$s_m$ 
(so $s_m=s$ for all~$m$ if $s$~is present).

We fix $m$ for a moment and for every $\rho\in\functions{i_m}{((m+1)(m+2))}$ 
we take a term $[a^m_\rho,b^m_\rho]$ of~$A$ such that 
$$
J^m_\rho=\bigl\{j<(m+1)(m+2):
\bA{s_m}{\varphi(\rho\concat j)}\subseteq [a^m_\rho,b^m_\rho]\bigr\}
$$ 
has maximum cardinality.
Divide $\functions{i_m}{((m+1)(m+2))}$ into two parts:
$R_m=\{\rho:\card{J^m_\rho}\ge m\}$ and its complement~$S_m$.

The proof of Lemma~\ref{lemma.split} gives us a subfunction~$\phi_m$
of~$\varphi_m\restr\functions{\le i_m}{((m+1)(m+2))}$ whose domain 
is $(m+1)(m+2)/2$-branching and such 
that $X_m=\dom\phi_m\cap\functions{i_m}{((m+1)(m+2))}$
is a subset of~$R_m$ or of~$S_m$.

In case $X_m\subseteq R_m$ we define a set $F^m_t$ as follows:
$$
F^m_t=F\cap \bigcup\bigl\{ \bA{s_m}{\varphi(\rho\concat j)}:
   \rho\in X_m\text{ and } j\in J^m_\rho\bigr\}
$$
We extend $\phi_m$ to a subfunction $\psi_m$ of~$\varphi_m$ by adding
$$
\bigl\{\rho\in\dom\varphi_m: (\exists\sigma\in X_m)(\exists j\in J^m_\sigma)
 (\sigma\concat j \subseteq\rho)\bigr\}
$$
to its domain and using the values of~$\varphi_m$ at those points.
The resulting function is (more than) $m$-dense for $F^m_t$ and~$T_m$.
Also, if $n\in F^m_t$ then there are $\rho\in X_m$ and $j\in J^m_\rho$
such that 
$n\in \bA{s_m}{\varphi(\rho\concat j)} \subseteq \bA t{\varphi(\rho)}$
and, by definition,
$F^m_t\cap\bA t{\varphi(\rho)}\subseteq[a^m_\rho,b^m_\rho]$.
This shows that if $F^m_t$ were to contribute to $F_t$ it would also
witness $A^t\preceq_{F_t}A$.

Thus, if the situation $X_m\subseteq R_m$ occurs infinitely often then we 
can build an~$F_t$ such that $A^t\preceq_{F_t}A$.

In the other case we get $X_m\subseteq S_m$ infinitely (even cofinitely) often.
We shall build an~$F_t$ that will satisfy $A\preceq_{F_t} A^t$ and even
$A\preceq_{F_t} A^s$ if $s$~is present.

Consider an $m$ such that $X_m\subseteq S_m$ and fix $\rho\in X_m$.
For each term~$[a,b]$ of~$A$ the set 
$\{j:\bA{s_m}{\varphi(\rho\concat j)}\subseteq [a,b]\}$ has at most 
$m-1$~elements; as $[a,b]$ is an interval these are consecutive elements.
This means that $[a,b]$ can intersect at most $m+1$ of these intervals:
at most $m-1$ in the interior and possibly two more that merely overlap at the 
ends. 
We use the intervals indexed by $X_m$ and $I=\{(m+2)(j+1):j<m\}$ to 
define~$F^m_t$: 
$$
F^m_t=F\cap \bigcup\bigl\{ \bA{s_m}{\varphi(\rho\concat i)}:
           \rho\in X_m\text{ and } i\in I\bigr\}
$$
the same formula as in the case `$X_m\subseteq R_m$' with
$J^m_\rho$ replaced by~$I$.
Now if $[a,b]$ is a term of~$A$ and $n\in F^m_t\cap[a,b]$ then there
are one $\rho\in X_m$ and one $i\in I$ such that 
$n\in\bA{s_m}{\varphi(\rho\concat i)}$
and the latter is also the only interval of that form that $[a,b]$
intersects.
It follows automatically that 
$$
F^m_t\cap[a,b]\subseteq\bA{s_m}{\varphi(\rho\concat i)}
               \subseteq\bA t{\varphi_m(\rho)}.
$$
Thus, if we let $F_t$ be the union of these $F^m_t$ then we achieve 
$A\preceq_{F_t}A^t$ and even $A\preceq_{F_t}A^s$ if $s$~is present. 
\end{proof}

\begin{lemma}\label{lemma.insert}
Let $\trip{F}{T}{\calA_T}$ be a \goodt/ and $A\in\Gamma$.
Then there are $G\subseteq F$ and an extension~$T^*$ of~$T$ by at most one 
point~$t^*$ such that $\trip{G}{T^*}{\calA_{T^*}}$ is a \goodt/
and $A\equiv_{G}A_t$ for some $t\in T^*$. 
\end{lemma}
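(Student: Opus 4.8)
The plan is to combine Lemma~\ref{lemma.compare} with the one-point extension lemma. First I would apply Lemma~\ref{lemma.compare} to the given \goodt/ $\trip{F}{T}{\calA_T}$ with the given $A$ and a suitable~$t$; more precisely, I want to locate~$A$ in the order~$T$ by a bisection-style search. Since $T$ has a minimum~$0$ (with $A^0=\abseq[0]$, so $A\preceq_F A^0$ trivially up to shrinking $F$) and a maximum~$1$ (and one arranges $A^1\preceq A$ or similar), repeated application of Lemma~\ref{lemma.compare} along a descending/ascending run through a countable cofinal subset of~$T$ will, after passing to a decreasing $\omega$-chain of subsets of~$F$ and diagonalising via Lemma~\ref{lemma:diagonal}, produce a single $G_0\subseteq F$ with $\trip{G_0}{T}{\calA_T}$ a \goodt/ such that $A$ is ``pinched'' between two $\prec$-consecutive elements of~$T$, or is comparable (modulo $G_0$) with one particular $A^t$. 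Concretely, enumerate $T=\{t_n:n\in\omega\}$ and run the usual back-and-forth: maintain at stage~$n$ a pair $s_n\prec u_n$ in~$T$ with $A^{s_n}\preceq_{G_n} A$ and $A\preceq_{G_n}A^{u_n}$, and at stage~$n+1$ use Lemma~\ref{lemma.compare} at $t_{n+1}$ (if $s_n\prec t_{n+1}\prec u_n$) to decide whether to move $s$ up or $u$ down, shrinking $G$ accordingly.

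Once $A$ has been localised, there are two cases. If the search stabilises at some $t\in T$ with $A\equiv_{G}A^t$ already — i.e. $A\preceq_G A^t$ and $A^t\preceq_G A$ both hold modulo the resulting $G$ — then we are done with $T^*=T$: the two refinements modulo $G$ force, on each $n\in G$, that the $A$-term and the $A^t$-term through $n$ have the same trace on $G$, which is exactly $A\equiv_G A^t$. (One may need to shrink $G$ once more so that whenever $[a,b]\cap G\subseteq[c,d]$ and $[c,d]\cap G\subseteq[a',b']$ with $[a,b],[a',b']$ terms of the same sequence, then $[a,b]\cap G=[a',b']\cap G$; this is a routine thinning, removing at most one point near each shared endpoint.) Otherwise the search exhibits $A$ strictly between two $\prec$-consecutive elements $s\prec u$ of~$T$ (consecutive in the final countable picture), with $A^s\preceq_G A\preceq_G A^u$ and $A^u\preceq_G A$ failing and $A\preceq_G A^s$ failing. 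Then introduce the new point $t^*$ with $s\prec t^*\prec u$ and apply the one-point extension lemma (the lemma immediately preceding Lemma~\ref{lemma.compare}) to $\trip{G}{T}{\calA_T}$ to obtain $G'\subseteq G$ and $A^{t^*}\in\Gamma$ with $\trip{G'}{T\cup\{t^*\}}{\calA_T\cup\{A^{t^*}\}}$ a \goodt/. Finally, re-apply Lemma~\ref{lemma.compare} at the new point $t^*$ (which now has direct predecessor $s$) to $A$: because $A$ sits strictly between $s$ and~$u$, the alternative ``$A\preceq A^s$'' is excluded and the alternative ``$A^u\preceq A$'' is excluded by the choice of $u$, so what remains forces $A\preceq_{G''}A^{t^*}$ and $A^{t^*}\preceq_{G''}A$ simultaneously, hence $A\equiv_{G''}A^{t^*}$ after one more cosmetic thinning. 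Set $G=G''$.

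The main obstacle I expect is bookkeeping the interaction between the two directions of the search while keeping a \emph{single} filter set alive: each invocation of Lemma~\ref{lemma.compare} only gives one inequality in one direction and shrinks~$F$, so to end with one $G$ realising $A\equiv_G A^{t^*}$ I must be sure that the ``$\preceq$'' obtained on one side is not destroyed by later shrinking — it is not, since $\preceq_F$ is monotone under shrinking $F$ — and, crucially, that the freshly inserted $A^{t^*}$ really is comparable with $A$ in \emph{both} directions rather than just one. That is why the localisation must be pushed far enough that the two ``escape'' options in Lemma~\ref{lemma.compare} at $t^*$ are genuinely unavailable; establishing that cleanly (and checking that the one-point extension lemma's output $A^{t^*}$ is ``close enough'' to the interval structure witnessed during the search that the final comparison collapses to equivalence) is the delicate point. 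The limit-style diagonalisation at the end of the search is handled by Lemma~\ref{lemma:diagonal} exactly as in the sketch of the master construction, so that part is routine.
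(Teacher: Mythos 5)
There is a genuine gap, and it sits exactly where you flagged the ``delicate point'': the insertion case. Your plan is to secure goodness of the extended triple first, by invoking the generic one-point extension lemma to manufacture $A^{t^*}$, and only afterwards to force $A\equiv A^{t^*}$ by re-applying Lemma~\ref{lemma.compare} at~$t^*$. This cannot work. The $A^{t^*}$ produced by that extension lemma is obtained by bundling groups of $m$ consecutive terms of the predecessor's sequence into single intervals; it bears no relation to the given~$A$, so there is no reason the two-sided comparison should come out in your favour. Moreover Lemma~\ref{lemma.compare} only ever delivers \emph{one} disjunct of a disjunction ($A\preceq_{F_{t^*}}A^{t^*}$ or $A^{t^*}\preceq_{F_{t^*}}A$, resp.\ $A\preceq_{F_{t^*}}A^{s}$ or $A^{t^*}\preceq_{F_{t^*}}A$), and you do not get to choose which; ruling out $A\preceq A^{s}$ at best pins down $A^{t^*}\preceq A$, and nothing upgrades this to the reverse refinement $A\preceq A^{t^*}$, let alone to equivalence. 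The paper inverts your order of operations: in the gap case it builds $A^{t^*}$ \emph{out of the terms of $A$ itself}, using the sandwich $A^{s_m}\preceq A\preceq A^{t_m}$ to choose, for each node $\rho$ of the dense tree, a term of~$A$ squeezed between the corresponding terms of $A^{s_m}$ and $A^{t_m}$; the equivalence $A\equiv_G A^{t^*}$ is then automatic, and the real work goes into re-indexing the dense functions (redefining $\phi_m$ on level $i_m$) so that the \emph{extended} triple is \good/. Goodness and equivalence are achieved by one and the same construction and cannot be decoupled in the way you propose.

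A secondary problem is that your localisation only foresees two outcomes (``stabilises at one $t$ with both refinements'' or ``pinched strictly between two consecutive points''), whereas the cut $(S_0,S_1)$ of $T$ determined by the direction of comparison has four shapes: both endpoints present, exactly one endpoint present (on either side), or a genuine gap --- in which case there are no two consecutive elements of $T$ at all. In the one-endpoint cases the paper still realises $A\equiv_G A^t$ at the \emph{existing} endpoint, again via the sandwich construction and the level-skipping functions $\phi_m$, $\psi_m$; your search only yields a one-directional refinement there and would be forced into the insertion branch, which fails for the reason above. (Your ``stabilises'' case is also not guaranteed to occur by repeated use of Lemma~\ref{lemma.compare} alone; the paper extracts the two-sided point from the structure of the cut together with the direct-predecessor clause of that lemma, not from the search terminating.)
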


\begin{proof}
We apply Lemma~\ref{lemma.compare} countably many times and 
Lemma~\ref{lemma:diagonal} once so that we can assume that for every~$t\in T$
there is a cofinite subset~$F_t$ of~$F$ such that $A\preceq_{F_t}A^t$ 
or $A^t\preceq_{F_t}A$ and even $A\preceq_{F_t}A^s$ or $A^t\preceq_{F_t}A$
if $t$ has a direct $\prec$-predecessor~$s$.

We divide $T$ into $S_0=\{t:A^t\preceq_{F_t} A\}$ 
and $S_1=\{t:A\preceq_{F_t} A^t\}$.
Note that $0\in S_0$ by default.

We need to consider several cases.

\begin{case}{$S_0$ has a maximum and $S_1$ has a minimum}
Note that by the condition on direct predecessors these must be identical,
say $t=\max S_0=\min S_1$.
Then one verifies that $A\equiv_{F_t}A^t$.
\end{case}

\begin{case}{$S_1$ is empty}
In this case we have $A^1\preceq_{G}A$ and we can thin out $F$ to a set~$G$
such that $A^1\equiv_G A$; then $\trip{G}{T}{\calA_T}$ is a \goodt/. 
\end{case}

For the other cases we write $T$ as the union of an increasing 
sequence~$\omegaseq[m]T$ of finite sets such that $0,1\in T_0$;
as before we take the decreasing enumeration $\langle t^m_i:I<l_m\rangle$
of~$T_m$. For each~$m$ we let $i_m$ be such that $t^m_{i_m}\in S_1$
and $t^m_{i_m+1}\in S_0$; we denote these two points by~$t_m$ and~$s_m$
respectively.   

Furthermore we choose $\omegaseq[m]\varphi$ as in 
Lemma~\ref{lemma.disjoint.dense} so that $\varphi_m$~is
$m$-dense for $F$ and $T_m$ and such 
that $\Im\varphi_m\subseteq F_{t_m}\cap F_{s_m}$.

Fix $m$ for a moment.
We know that $A^{s_m}\preceq_{F_{s_m}} A\preceq_{F_{t_m}}A^{t_m}$;
this implies that for every $\rho\in\functions{i_m}m$ and every $j<m$ there
is a term~$[a,b]$ of~$A$ such that
$$
\bA{s_m}{\varphi(\rho\concat j)}\cap F
  \subseteq[a,b]\cap F  \subseteq\bA{t_m}{\varphi(\rho)}\cap F
\eqno(*)
$$
indeed, $[a,b]$ is found by an application of $A^{s_m}\preceq_{F_{s_m}} A$
and $\bA{t_m}{\varphi(\rho)}$ is the only
possible term of~$A^{t_m}$ that can help witness $A\preceq_{F_{t_m}}A^{t_m}$.

We put 
$G_m=F\cap\bigcup_\rho\bA{s_m}{\varphi(\rho\concat0)}$,
where $\rho$~runs through $\functions{i_m}m$.
We can define two functions~$\phi_m$ and $\psi_m$ 
on~$\functions{\le l_m-1}m$, as follows.
\begin{enumerate}
\item If $|\rho|<i_m$ then $\phi_m(\rho)=\psi_m(\rho)=\varphi_m(\rho)$.
\item If $|\rho|=i_m$ then $\phi_m(\rho)=\varphi_m(\rho)$
                      and $\psi_m(\rho)=\varphi_m(\rho\concat0)$.
\item If $|\rho|>i_m$, say $\rho=\varrho\concat\sigma$, with $|\varrho|=i_m$,
      then $\phi_m(\rho)=\psi_m(\rho)=\varphi(\varrho\concat0\concat\sigma)$.
\end{enumerate}
So, in $\phi_m$ we skip level~$i_m+1$ of the domain of~$\varphi_m$
and in~$\psi_m$ we skip level~$i_m$.
The effect is that $\phi_m$ is $m$-dense for $T_m\setminus\{s_m\}$ and $G_m$,
whereas $\psi_m$~is $m$-dense for $T_m\setminus\{t_m\}$ and~$G_m$.

In addition we have made sure that
$A^{s_m}\equiv_{G_m}A\equiv_{G_m}A^{t_m}$.

We let $G=\bigcup_mG_m$ and consider the remaining cases in turn.

\begin{case}{$S_0$ has no maximum and $S_1$ has a minimum, say $t=\min S_1$}
In this case we know that $t_m=t$ cofinitely often.
If we drop the finitely many $G_m$ for which $t\neq t_m$ then we 
achieve $A\equiv_GA^t$.
Moreover $\trip{G}{T}{\calA_T}$ is a \goodt/, as witnessed by the 
functions~$\phi_m$.   
\end{case}

\begin{case}{$S_0$ has a maximum and $S_1$ has no minimum, say $t=\max S_0$}
In this case we know that $t_m=s$ cofinitely often.
If we drop the finitely many $G_m$ for which $s\neq t_m$ then we 
achieve $A^s\equiv_GA$.
Moreover $\trip{G}{T}{\calA_T}$ is a \goodt/, as witnessed by the 
functions~$\psi_m$.   
\end{case}

\begin{case}{$S_0$ has no maximum and $S_1$ has no minimum}
This case necessitates adding a new point, $t^*$, to $T$ to form~$T^*$
and we insert~$t^*$ into the gap formed by~$S_0$ and~$S_1$.
We then redefine $\phi_m$ on level~$i_m$ so that
its value at~$\rho$ becomes the index of the term of~$A$ that was chosen
to satisfy inclusions~$(*)$.
The new $\phi_m$ is $m$-dense for $\{t^*\}\cup T_m\setminus\{s_m,t_m\}$
and~$G_m$; this establishes that $\trip{G}{T^*}{\calA_{T^*}}$
is a \goodt/.\qedhere
\end{case}
\end{proof}

Repeated application of these lemmas will prove the following theorem,
where we extend the notion of equivalence to (ultra)filters:
if $p$ is an (ultra)filter on~$\omega$ then 
$A\equiv_p B$ means that $A\equiv_FB$ for some $F\in p$.

\begin{theorem}[$\CH$]\label{mainLemma} 
Let $T$ be a linear order of cardinality at most~$\aleph_1$ that has a maximum
and no $\orpr\omega\omega$-gaps.
Then one can find a subcollection 
$\calA_T = \{ A_t : t\in T\}$ of~$\Gamma$ and a P-point ultrafilter~$p$ 
on~$\omega$ such that 
\begin{enumerate}
\item $\trip{p}{\calA_T}{T}$ is a \goodt/
\item for all $A\in \Gamma$, there is a $t\in T$ such that $A\equiv_p A_t$.\qed
\end{enumerate}
\end{theorem}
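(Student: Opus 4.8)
The plan is to construct $p$ and $\calA_T$ by a transfinite recursion of length $\omega_1$, using $\CH$ to list in type $\omega_1$ everything that has to be attended to. Since $\card\Gamma=\card{\Pow(\omega)}=\cee=\aleph_1$ and $\card T\le\aleph_1$, fix enumerations $\langle A^\alpha:\alpha<\omega_1\rangle$ of $\Gamma$, $\langle X_\alpha:\alpha<\omega_1\rangle$ of $\Pow(\omega)$ and $\langle s_\alpha:\alpha<\omega_1\rangle$ of $T$, together with a bookkeeping function calling cofinally often for each of three kinds of task. After adjoining, if necessary, a minimum~$0$ and a maximum~$1$ to~$T$ (which creates no new $\orpr\omega\omega$-gap) we build recursively a $\subseteq^*$-decreasing sequence $\langle F_\alpha:\alpha<\omega_1\rangle$ of infinite subsets of~$\omega$, an increasing chain $\langle T_\alpha:\alpha<\omega_1\rangle$ of countable subsets of~$T$ with $0,1\in T_0$ and $\bigcup_\alpha T_\alpha=T$, and a map $t\mapsto A_t$ on $\bigcup_\alpha T_\alpha$ with values in~$\Gamma$, maintaining that $\trip{F_\alpha}{T_\alpha}{\calA_\alpha}$ is a \goodt/ at every stage, where $\calA_\alpha=\{A_t:t\in T_\alpha\}$. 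One may start from any easily checked \goodt/ with index set~$\{0,1\}$, say $\trip\omega{\{0,1\}}{\{A_0,A_1\}}$ with $A_0$ the baseline sequence and $A_1$ a suitable coarsening of it.

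At a limit stage~$\lambda$ put $\calF_\lambda=\{F_\beta:\beta<\lambda\}$, $T_\lambda=\bigcup_{\beta<\lambda}T_\beta$ and $\calA_\lambda=\bigcup_{\beta<\lambda}\calA_\beta$; since every clause in the definition of \goodt/ mentions only finitely many points of~$T$ and a single element of the filter, all occurring at an earlier stage, $\trip{\calF_\lambda}{T_\lambda}{\calA_\lambda}$ is a \goodt/, and Lemma~\ref{lemma:diagonal} replaces $\calF_\lambda$ by a single infinite~$G$ with $G\subseteq^* F_\beta$ for all $\beta<\lambda$. Thus at every stage --- at successors directly, at limits after this collapsing step --- we hold a \goodt/ $\trip FS{\calA_S}$ with $F$ a single set, and we carry out the task prescribed by the bookkeeping: a \emph{splitting task} applies Lemma~\ref{lemma.split} to the partition $F=(F\cap X_\alpha)\cup(F\setminus X_\alpha)$ and keeps whichever half yields a \goodt/; a \emph{point-adding task} applies the lemma on adjoining one point to~$T$, inserting the first point of~$T$ (in the list $\langle s_\alpha\rangle$) not yet in~$S$ at its $\prec$-position in~$T$ relative to~$S$, which lies strictly between~$0$ and~$1$, and produces a corresponding $A^{t}\in\Gamma$; an \emph{$A$-task} applies Lemma~\ref{lemma.insert} to~$A^\alpha$. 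Each task outputs a \goodt/ $\trip{F_\alpha}{T_\alpha}{\calA_\alpha}$ with $F_\alpha$ a single infinite subset of~$F$, hence $F_\alpha\subseteq^*F_\beta$ for all $\beta<\alpha$; and a routine fairness argument ensures $\bigcup_\alpha T_\alpha=T$.

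The delicate point is that Lemma~\ref{lemma.insert} may want to enlarge $S=T_\alpha$ by a brand-new point~$t^*$, inserted into a cut $\orpr{S_0}{S_1}$ of~$T_\alpha$ in which $S_0$ has no maximum and $S_1$ has no minimum; we must not enlarge~$T$, and it is here, and only here, that the hypothesis on $\orpr\omega\omega$-gaps is used. Let $D$ be the downward $\prec$-closure of~$S_0$ in~$T$ and $U$ the upward $\prec$-closure of~$S_1$. Since $S_0\prec S_1$ one checks $D\cap T_\alpha=S_0$ and $U\cap T_\alpha=S_1$, and since $S_0$ is a countable subset of~$D$ cofinal in~$D$ with no last element, while $S_1$ is a countable coinitial subset of~$U$ without first element, the pair $\orpr DU$ --- were it to exhaust~$T$ --- would be an $\orpr\omega\omega$-gap of~$T$, contrary to hypothesis. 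Hence some $t\in T\setminus(D\cup U)$ exists, and any such~$t$ satisfies $S_0\prec t\prec S_1$; take the $\langle s_\alpha\rangle$-first such~$t$ and simply rename the point~$t^*$ delivered by Lemma~\ref{lemma.insert} as this~$t$. Since $t$ is not in $T_\alpha$ and occupies in $T_\alpha\cup\{t\}$ exactly the order-theoretic slot that $t^*$ held, $\trip{F_\alpha}{T_\alpha\cup\{t\}}{\calA_\alpha\cup\{A_t\}}$ is still a \goodt/, $T_\alpha\cup\{t\}$ is an induced suborder of~$T$, and $A^\alpha\equiv_{F_\alpha}A_t$. This relabelling is the heart of the argument, and the step I expect to be the main obstacle.

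Finally, let $p$ be the filter on~$\omega$ generated by $\{F_\alpha:\alpha<\omega_1\}$ together with the cofinite sets. The splitting tasks decide every subset of~$\omega$, so $\{F_\alpha:\alpha<\omega_1\}$ is an ultrafilter base and $p$ is an ultrafilter; moreover, as these sets form a $\subseteq^*$-decreasing $\omega_1$-tower generating~$p$, any countable $\{Y_n:n\in\omega\}\subseteq p$ has each $Y_n\supseteq^*F_{\alpha_n}$ for some~$\alpha_n$, so $F_{\sup_n\alpha_n}\in p$ is a pseudo-intersection and $p$ is a $P$-point. That $\trip p{\calA_T}T$ is a \goodt/ is checked clause by clause: any instance mentions only finitely many points of~$T$ and one $F\in p$, so one picks a stage~$\alpha$ with those points in~$T_\alpha$ and $F_\alpha\subseteq^*F$; the \goodt/ $\trip{F_\alpha}{T_\alpha}{\calA_\alpha}$ then supplies the required witness, where for clause~\eqref{bad.tr.2} one first takes an $(m+\card{F_\alpha\setminus F})$-dense function and discards, at its top level, the finitely many branches whose value lies outside~$F$. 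Lastly, each $A$-task yields, for the listed $A^\alpha\in\Gamma$, a $t\in T$ with $A^\alpha\equiv_{F_\alpha}A_t$ and hence $A^\alpha\equiv_pA_t$, so clause~(2) of the theorem holds for every $A\in\Gamma$.
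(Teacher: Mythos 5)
Your proposal is correct and follows essentially the same route as the paper, whose ``proof'' of Theorem~\ref{mainLemma} is just the recursion sketched at the start of Section~\ref{sec:getting} together with Lemmas \ref{lemma:diagonal}, \ref{lemma.split}, the one-point-extension lemma, and Lemma~\ref{lemma.insert}. In particular you correctly identify the one point the paper leaves implicit: the no-$\orpr\omega\omega$-gaps hypothesis is used exactly to realize the new point $t^*$ demanded by Lemma~\ref{lemma.insert} as an actual element of~$T$ filling the cut $\orpr{S_0}{S_1}$.
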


\section{Finding many different continua}

In this section we shall use Theorem~\ref{mainLemma} 
(and hence the Continuum Hypothesis)
to find $2^\cee$~many different subcontinua of~$\Hstar$.

We shall apply the theorem to the following type of linearly ordered sets
\begin{enumerate}
\item cardinality at most $\aleph_1$
\item no $\orpr\omega\omega$-gaps
\item cofinality $\aleph_0$ (in particular: no maximum)
\end{enumerate}
In keeping with our use of the vernacular we shall call this a \emph{mean}
linear order.

\subsection{One continuum}

Let $T$ be a mean linear order.
We order $T^+=T\cup\{T\}$ ordered by stipulating that $t\prec T$ 
for all~$t\in T$.
We apply Theorem~\ref{mainLemma} to~$T^+$ to obtain a 
family~$\calA_T=\{A^t_p:t\in T^+\}$ and a P-point~$p$ satisfying the conditions
of that theorem.
We define
$$
K_T=\cl\bigcup_{t\in T}A^t_p,
$$
as announced in the beginning of Section~\ref{sec:getting}.

We list some properties of $K_T$ and the individual continua~$A^t_p$.

\begin{lemma}\label{lemma.layer}
For every~$t\neq\min T$ there is a layer~$L^t_p$ of~$A^t_p$ such that 
$\bigcup_{s\prec t}A^s_p\subseteq L^t_p$.  
\end{lemma}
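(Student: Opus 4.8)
The plan is to derive the lemma from the two ``ultrafilter translations'' recorded right after the definition of a \goodt/. Since $p$ is an ultrafilter and $\trip{p}{\calA_T}{T^+}$ is a \goodt/, property~\eqref{bad.tr.1} gives $A^s_p\subseteq A^t_p$ for every $s\prec t$, and property~\eqref{bad.tr.2} sharpens this to: $A^s_p$ is contained in a single layer of $A^t_p$; call that layer $L_s$. The whole content of the lemma is then the claim that the layers $L_s$, as $s$ runs over the set $\{s:s\prec t\}$ (nonempty because $t\neq\min T$), are all one and the same.

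To see this I would fix $s,s'$ with $s\prec s'\prec t$. Applying~\eqref{bad.tr.1} to the pair $s\prec s'$ gives $A^s_p\subseteq A^{s'}_p$, so the nonempty subcontinuum $A^s_p$ of $A^t_p$ lies inside $A^{s'}_p\subseteq L_{s'}$ and, by definition of $L_s$, also inside $L_s$. Now the layers of $A^t_p$ are the equivalence classes of the relation ``$x\preceq y$ and $y\preceq x$'' on $A^t_p$, so they partition $A^t_p$; two layers meeting a common point must be equal, hence $L_s=L_{s'}$. Since $\prec$ linearly orders $\{s:s\prec t\}$, this forces $L_s=L_{s'}$ for all $s,s'\prec t$; denoting the common layer by $L^t_p$ we obtain $\bigcup_{s\prec t}A^s_p\subseteq L^t_p$, which is the assertion.

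The proof presents no real difficulty; the one place that must be invoked with care is that $A^s_p$ genuinely sits inside one layer of $A^t_p$ --- i.e.\ the second translation clause, which the paper has already stated: with $F\in p$ witnessing~\eqref{bad.tr.1}, the partial map $\psi$ defined by $\psi(k)=l$ iff $\A sk\subseteq\A tl$ is, modulo $p$, total and finite-to-one, and~\eqref{bad.tr.2} forces its fibers to have unbounded cardinality on every member of $p$, which is exactly the description of ``lying in a layer of $A^t_p$'' furnished by the quasi-order $\preceq$ on this standard subcontinuum. If one wanted a self-contained proof one would reproduce that unwinding here; beyond it the argument is only the disjointness-of-layers step above, plus the remark that for the excluded value $t=\min T$ there is nothing to prove, since then no $s$ satisfies $s\prec t$.
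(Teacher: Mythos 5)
Your proof is correct and follows essentially the same route as the paper's: the paper likewise quotes the already-established fact (Lemma~6.2 of~\cite{Hart}, equivalently the remark after the definition of a \goodt/) that $A^s_p$ lies in a layer of $A^t_p$ for $s\prec t$, and then observes that since $\calA_T$ is a chain this layer is independent of~$s$ --- exactly your disjointness-of-layers argument, which you merely spell out in more detail. No gaps; the hypothesis $t\neq\min T$ is used just as you say, to guarantee there is at least one $s\prec t$.
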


\begin{proof}
Lemma~6.2 of~\cite{Hart} establishes that $A^s_p$~is contained in a layer 
of~$A^t_p$ whenever $s\prec t$; because $\calA_T$ is a chain this layer
is independent of~$s$.  
We need the assumption $t\neq\min T$ to ensure that we actually
have points below~$t$.
\end{proof}

\begin{lemma}
Every $A^t_p$ is nowhere dense in $K_T$ and 
$\bigcup_{t\in T}L^t_p=\bigcup_{t\in T}A^t_p$.  
\end{lemma}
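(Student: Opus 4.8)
The plan is to prove the two assertions separately; each follows quickly from Lemma~\ref{lemma.layer} and the structural facts on layers recalled in Section~\ref{subsec:subcont}. Throughout I use that $T$ is mean, hence has no maximum, and that the family $\{A^t_p:t\in T\}$ is a $\preceq$-chain: if $s\prec t$ then $A^s_p\subseteq L^t_p\subseteq A^t_p$. I would dispose of the equality $\bigcup_{t\in T}L^t_p=\bigcup_{t\in T}A^t_p$ first. The inclusion from left to right is immediate, since each $L^t_p$ is a layer of $A^t_p$ and so a subset of it. For the reverse inclusion, fix $t\in T$; since $T$ has no maximum there is some $s\in T$ with $t\prec s$, and then Lemma~\ref{lemma.layer} gives $A^t_p\subseteq L^s_p$ (note $s\neq\min T$, so $L^s_p$ is defined). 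The index $\min T$, for which $L^{\min T}_p$ is not provided by Lemma~\ref{lemma.layer}, causes no trouble, since $A^{\min T}_p\subseteq L^s_p$ for every $s\in T$ with $\min T\prec s$.

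For the first assertion, fix $t\in T$ and an arbitrary $s\in T$ with $t\prec s$. By Lemma~\ref{lemma.layer}, $A^t_p\subseteq L^s_p$, and $L^s_p$ is a layer of the standard subcontinuum $A^s_p$. Since $A^s_p$ properly contains each of its layers (its layer decomposition being non-degenerate, Section~\ref{subsec:subcont}), the remark after Lemma~\ref{lemma.comparable}, applied to the standard subcontinuum $A^s_p$ and the indecomposable continuum $L^s_p$, puts us in the case where $L^s_p$ is nowhere dense in $A^s_p$; hence so is its subset $A^t_p$. This holds for every $s\in T$ with $t\prec s$. To pass to $K_T$, I would note that, as the $A^s_p$ form a chain and $t$ is not maximal, $\bigcup\{A^s_p:t\prec s\}=\bigcup_{s\in T}A^s_p$, which is dense in $K_T$ by definition of $K_T$. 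If $A^t_p$ had a nonempty interior $U$ in $K_T$, then $U$ would meet some $A^s_p$ with $t\prec s$, and $U\cap A^s_p$ would be a nonempty relatively open subset of $A^s_p$ contained in $A^t_p$, contradicting that $A^t_p$ is nowhere dense in $A^s_p$. Hence $A^t_p$ is nowhere dense in $K_T$.

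No step is a genuine obstacle; once Lemma~\ref{lemma.layer} is in hand this is routine bookkeeping. The two points that deserve a little care are the correct invocation of the remark after Lemma~\ref{lemma.comparable} — making sure one lands in its ``nowhere dense'' alternative rather than the degenerate one $A^s_p\subseteq L^s_p$ — and the elementary density argument by which nowhere-density of a closed set in each member of a dense chain of subcontinua is transferred to the closure $K_T$.
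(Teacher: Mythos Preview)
Your proposal is correct and follows the same approach as the paper: pick $s\succ t$, use Lemma~\ref{lemma.layer} to get $A^t_p\subseteq L^s_p$, and use that a layer is nowhere dense in its standard subcontinuum. The only superfluous step is your density detour at the end: since $U\subseteq A^t_p\subseteq A^s_p$ already, $U$ itself is a nonempty relatively open subset of~$A^s_p$, so there is no need to invoke density of $\bigcup_s A^s_p$ in~$K_T$ to find an $A^s_p$ that~$U$ meets.
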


\begin{proof}
Given $t\in T$ there is $s\in T$ such that $t\prec s$.
Then $A^t_p\subseteq L_s$, which establishes the equality of the two unions.

Because $L_s$~is nowhere dense in~$A^s_p$ this also implies
that $A^t_p$~is nowhere dense in~$K_T$.
\end{proof}

\begin{lemma}
$K_T$ is indecomposable.  
\end{lemma}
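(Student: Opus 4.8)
The plan is to use the standard criterion for indecomposability: a continuum is indecomposable precisely when every proper subcontinuum is nowhere dense, equivalently, when it cannot be written as the union of two proper subcontinua. I would argue by contradiction: suppose $K_T = P \cup Q$ with $P$ and $Q$ proper subcontinua. Each of $P$ and $Q$ must miss some $A^t_p$, and since $\{A^t_p : t \in T\}$ is a chain whose union is dense in $K_T$, one of $P$, $Q$ — say $P$ — fails to contain $A^t_p$ for a cofinal set of $t$'s, while $Q$ must then contain a tail $\bigcup_{t \succ t_0} A^t_p$ of the chain, hence contain $K_T$ itself, contradicting properness. The only delicate point in this line is justifying that a proper subcontinuum cannot be cofinal in the chain.

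So the heart of the matter is the following claim: if $C$ is a subcontinuum of $K_T$ that contains $A^t_p$ for a cofinal set of $t \in T$, then $C = K_T$. Here I would invoke the machinery of Section~1. By Lemma~\ref{lemma.layer}, for each $t \neq \min T$ the union $\bigcup_{s \prec t} A^s_p$ lies inside a single layer $L^t_p$ of $A^t_p$, and every layer is an indecomposable continuum, nowhere dense in $A^t_p$. If $C$ contains $A^t_p$ for cofinally many $t$, then for each fixed $s$ there is such a $t \succ s$, so $A^s_p \subseteq L^t_p \subseteq A^t_p \subseteq C$; hence $C \supseteq \bigcup_{s \in T} A^s_p$, and since $C$ is closed, $C \supseteq \cl\bigcup_{s} A^s_p = K_T$. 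This is actually the easy direction. The real content is the dual: a proper subcontinuum $C$ of $K_T$ must be contained in some $A^t_p$. Here I would use Lemma~\ref{lemma.comparable} and Lemma~\ref{lemma.K-int-L}: if $C$ meets $A^t_p$ and $C \not\subseteq A^t_p$ then, since $A^t_p$ is indecomposable, $A^t_p \subseteq C$; running this over all $t$, if $C$ is not contained in any single $A^t_p$ then $C$ contains $A^t_p$ for every $t$ that it meets, and (using that $C$ meets cofinally many of them, which follows from $C$ meeting the dense set $\bigcup_t A^t_p$ in a relatively open set) we are back in the previous paragraph and $C = K_T$.

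Assembling this: let $K_T = P \cup Q$ with both proper. By the previous paragraph each of $P$, $Q$ is contained in some member of the chain, say $P \subseteq A^{t_P}_p$ and $Q \subseteq A^{t_Q}_p$; taking $t = \max(t_P, t_Q)$ (or any $t \succ t_P, t_Q$, using that $T$ has no maximum is not even needed here, only that the chain is linearly ordered) we get $K_T = P \cup Q \subseteq A^t_p$. But then $A^s_p \subseteq A^t_p$ for all $s$, whereas for $s \succ t$ the relation $\prec$ gives $A^t_p$ inside a layer of $A^s_p$, so $A^t_p$ is nowhere dense in $A^s_p \subseteq K_T = A^t_p$, a contradiction. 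Hence $K_T$ is indecomposable.

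The step I expect to be the main obstacle is the claim that every proper subcontinuum of $K_T$ sits inside some $A^t_p$. The subtlety is the dichotomy from Lemma~\ref{lemma.comparable}: when $C$ meets $A^t_p$ without being comparable one needs indecomposability of $A^t_p$ to force $A^t_p \subseteq C$, and one must be careful that $C$ does meet sufficiently many $A^t_p$ — this uses that $\bigcup_t A^t_p$ is dense in $K_T$ together with the fact that a nonempty relatively open subset of $K_T$ meets cofinally many links of the chain, which in turn follows because each $A^t_p$ is nowhere dense in $K_T$ (the previous lemma). Once that is pinned down the rest is bookkeeping with the quasi-order on layers.
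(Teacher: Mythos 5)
Your overall architecture is the paper's: show that a proper subcontinuum of $K_T$ is forced to be nowhere dense (equivalently, that $K_T$ cannot be the union of two proper subcontinua) by playing the comparability lemma against the density of the chain. But the pivotal step is wrong as written. You assert ``if $C$ meets $A^t_p$ and $C\not\subseteq A^t_p$ then, \emph{since $A^t_p$ is indecomposable}, $A^t_p\subseteq C$.'' The continua $A^t_p$ are \emph{not} indecomposable: each $A^t_p$ is a standard subcontinuum, i.e.\ a homeomorphic copy of some $\II_u$, which has a dense set of cut points and is therefore decomposable (indeed, by Lemma~\ref{lemma.interval} the decomposable subcontinua of $\Hstar$ are exactly the non-trivial intervals of standard subcontinua, and $A^t_p$ is such an interval of itself). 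So Lemma~\ref{lemma.comparable} does not apply to the pair $(C,A^t_p)$, and the dichotomy you need --- $A^t_p\subseteq C$ or $C\subseteq A^t_p$ --- can genuinely fail: a subcontinuum may meet $A^t_p$ in a proper subinterval without either containment holding. This also infects your headline claim that every proper subcontinuum of $K_T$ lies inside some $A^t_p$, which is false even for trivial reasons (a singleton of $K_T\setminus\bigcup_tA^t_p$, which is nonempty by Baire since $T$ has cofinality $\aleph_0$ and each $A^t_p$ is closed and nowhere dense).

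The repair is exactly the move the paper makes: run the argument with the layers $L^t_p$ of Lemma~\ref{lemma.layer} instead of the continua $A^t_p$. The layers \emph{are} indecomposable (this is the fact recorded in the preliminaries), they still form a chain with $\bigcup_{s\prec t}A^s_p\subseteq L^t_p$, and their union over $t\in T$ equals $\bigcup_{t\in T}A^t_p$, hence is dense in $K_T$. Then for a proper subcontinuum $L$ and any $t$ with $L\cap L^t_p\neq\emptyset$, Lemma~\ref{lemma.comparable} legitimately gives $L^t_p\subseteq L$ or $L\subseteq L^t_p$; since $L^t_p\subseteq L$ for all such $t$ would force $L\supseteq\bigcup_tA^t_p$ and hence $L=K_T$, either $L$ misses the dense set $\bigcup_tL^t_p$ or $L\subseteq L^t_p$ for some $t$, and in both cases $L$ is nowhere dense. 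Your closing bookkeeping (two proper pieces each nowhere dense cannot cover $K_T$) then goes through; the detour via $K_T\subseteq A^t_p$ is unnecessary. Note also that Lemma~\ref{lemma.K-int-L}, which you cite, plays no role here.
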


\begin{proof}
The proof is implicit in~\cite{MR1217484} and~\cite{Hart} as part
of a construction of an indecomposable subcontinuum of~$\Hstar$
called~$K_9$ in the latter paper.

Let $L$ be a proper subcontinuum of~$K_T$.
Note that because each~$L^t_p$ is indecomposable we know that
$L^t_p\subseteq L$ or $L\subseteq L^t_p$ for all~$t$ such 
that $L\cap L^t_p$~is nonempty.
Since it is impossible that $L^t_p\subseteq L$ for all~$t$ (otherwise $L=K_T$)
it follows that $L\cap\bigcup_{t\in T}L^t_p=\emptyset$ or $L\subseteq L^t_p$
for some~$t$.
In either case $L$~is nowhere dense in~$K_T$.
\end{proof}

\begin{lemma}
Every $A^t_p$ is a P-set in~$\Hstar$ as is every~$L^t_p$, for $t\neq\min T$.
\end{lemma}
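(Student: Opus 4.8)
The plan is to reduce both statements to the single fact that $p$~is a $P$-point, and then to quote from~\cite{Hart} the description of which standard subcontinua, and which of their layers, are $P$-sets of~$\Hstar$.

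Consider $A^t_p$ first. By construction it is a standard subcontinuum, $A^t_p=[a^t_v,b^t_v]$, where $v$~is the ultrafilter with $p\in[a^t_v,b^t_v]$; this $v$~is unique provided $p$~avoids the countably many shared end points of~$A^t$, which we may assume. Let $\rho\colon\omega\to\omega$ send an integer~$m$ to the index of an interval of~$A^t$ containing~$m$; lifting $p$ to~$\M$ along a section of~$q_t$ over the integers, one checks that $v=\rho[p]$. As the intervals of $A^t\in\Gamma$ have integer end points, $\rho$~is finite-to-one, so $v$~is a nonprincipal ultrafilter lying Rudin--Keisler below~$p$; since a nonprincipal Rudin--Keisler predecessor of a $P$-point is again a $P$-point, $v$~is a $P$-point, and therefore $[a^t_v,b^t_v]$ is a $P$-set of~$\Hstar$. (One sees easily that $\II_v=\beta\pi\preim(v)$ is itself a $P$-set of~$\Mstar$: the clopen sets $\beta\pi\preim\bigl(\cl_{\beta\omega}B\bigr)\cap\Mstar$ with $B\in v$ are directed downwards with intersection~$\II_v$ and, $\beta\pi\restr\Mstar$ being closed, form a neighbourhood base of it, so a prescribed $G_\delta$ neighbourhood is defeated by a pseudo-intersection from~$v$. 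Transferring this along the closed surjection $\beta q\restr\Mstar$ is less routine, because a closed surjection need not preserve $P$-sets and neither end point of $[a^t_v,b^t_v]$ is a $P$-point of~$\Hstar$; it is for this reason that I would in any case appeal to~\cite{Hart}.)

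For $L^t_p$: under the homeomorphism $\beta q_t\restr\II_v$ it corresponds to a layer~$\Lambda$ of~$\II_v$, which by Lemma~\ref{lemma.layer} is the well-defined layer whose image contains every~$A^s_p$ with $s\prec t$. According to the discussion following the definition of a \goodt/, clause~\eqref{bad.tr.2} forces the partial index function $k\mapsto l$ determined by $\A sk\subseteq\A tl$ to be finite-to-one with fibres of unbounded size even on a set of~$p$; this is precisely what makes $\Lambda$ non-trivial, and in fact a \emph{supremum layer} of~$\II_v$ --- the least upper bound, in the layer order, of an increasing $\omega$-sequence of cut points --- so that, by the preliminaries, $\Lambda$ contains a copy of~$\Nstar$. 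The analysis of the layer structure of~$\II_u$ in~\cite{Hart} shows that, for $v$~a $P$-point, such a supremum layer is a $P$-set of~$\Hstar$; hence so is $L^t_p$.

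I expect the real work to be the identification of~$\Lambda$ as a supremum layer: one has to read off from clauses~\eqref{bad.tr.1} and~\eqref{bad.tr.2} that the layer of~$A^t_p$ containing all the $A^s_p$ is approached, in the layer order, by an $\omega$-sequence of cut points from below --- essentially, that the ``groups'' of consecutive intervals built up during the recursion converge to the correct cut --- after which the two $P$-set facts are black boxes quoted from~\cite{Hart}. The one comparable obstacle, relevant only if one insists on the $\M$-side argument, is the transfer of the $P$-set property along $\beta q\restr\Mstar$; here the fact that the shifts $v\pm1$ are $P$-points (being Rudin--Keisler images of~$p$) is what one would use, but the cleanest route is simply to cite~\cite{Hart}.
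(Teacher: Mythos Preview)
Your treatment of $A^t_p$ is essentially the paper's, only more explicit: you spell out why the relevant ultrafilter~$v$ is a $P$-point (finite-to-one image of~$p$), which the paper uses tacitly when it asserts that the preimage $\II_v\cup\{1_{v-1}\}\cup\{0_{v+1}\}$ is a $P$-set and then pushes this forward along the closed map~$\beta q\restr\Mstar$.

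Your argument for $L^t_p$, however, breaks down. A ``supremum layer'' in your sense --- the sup of an increasing $\omega$-sequence of cut points --- is \emph{never} a $P$-set: the half-interval below it is the union $\bigcup_n[0_v,x^n_v]$, an $F_\sigma$-set disjoint from the layer whose closure contains the layer. So the black box you hope to quote from~\cite{Hart} does not exist, and identifying $\Lambda$ as a supremum layer would actually prove the \emph{opposite} of what you want. In fact the paper establishes precisely that $L^t_p$ is \emph{not} of this type: it is not a ``countable cofinality'' layer, i.e.\ neither adjacent open half-interval is an $F_\sigma$-set with $L^t_p$ in its closure.

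The paper's proof of this is short and does not try to classify the layer at all. It argues by contradiction: if one half-interval~$I$ were an $F_\sigma$ with $L^t_p\subseteq\cl I$, pick any $s\prec t$; then $A^s_p\subseteq L^t_p$ is a $P$-set disjoint from~$I$, hence disjoint from~$\cl I$, contradicting $A^s_p\subseteq L^t_p\subseteq\cl I$. Thus the $P$-set property of the smaller continua, already in hand, forces the layer to have uncountable cofinality on both sides, and that (together with $A^t_p$ being a $P$-set) is what makes $L^t_p$ a $P$-set. Your reading of clause~\eqref{bad.tr.2} gives non-triviality of the layer but says nothing about its cofinalities; that information comes only from the $P$-set property of~$A^s_p$.
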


\begin{proof}
The preimage of $A^t_p$ under the parametrizing map $q:\Mstar\to\Hstar$
consists of~$\II_v$, the point~$1_{v-1}$ and the point~$0_{v+1}$, where
$v$~is such that $A^T_p=[a^t_v,b^t_v]$. 
This makes the preimage a P-set, as $\pi$~is closed this implies that $A^t_p$~is
a P-set as well.  

It suffices to show that $L^t_p$ is not a countable cofinality layer in~$A^t_p$
if $t\neq\min T$.
If $L^t_p$ were such a layer then one of the open intervals
with~$L^t_p$ as its end layer, call it~$I$, would be an $F_\sigma$-set
such that $I\cap L^t_p=\emptyset$ and $L^t_p\subseteq\cl I$.
Now let $s\prec t$; then $A^s_p$ is a P-set and $A^s_p\cap I=\emptyset$.
It follows that $A^s_p\cap\cl I=\emptyset$ as well, which contradicts
 $L^t_p\subseteq\cl I$.
\end{proof}

\subsection{Consequences of homeomorphy}
\label{subsec:consequences}

Let $T$ and $S$ be two mean linear orders.
We assume we have families $\calA_T$ and $\calA_S$ and P-points $p$ and $q$
respectively as in Theorem~\ref{mainLemma}.
We write $F_T=\bigcup_{t\in T}A^t_p$ and $F_S=\bigcup_{s\in S}A^s_q$ and
let $K_T=\cl F_T$ and $K_S=\cl F_S$.
We retain the notations $L^t_p$ and~$L^t_q$ respectively for the layers
from Lemma~\ref{lemma.layer}.
We assume that $K_T$ and $K_S$ are homeomorphic and let $f:K_T\to K_S$
be a homeomorphism.

\begin{lemma}\label{lemma.FT=FS}
$f[F_T]=F_S$.  
\end{lemma}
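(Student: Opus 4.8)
The plan is to show that a homeomorphism $f:K_T\to K_S$ must carry the "small" dense part $F_T$ exactly onto $F_S$, using the characterization of $F_T$ and $F_S$ as unions of nowhere dense indecomposable P-sets together with the layer structure. First I would observe that $F_T=\bigcup_{t\in T}L^t_p$ is a dense subset of $K_T$ that is the increasing union of the nowhere dense indecomposable continua $L^t_p$, each of which is a P-set in $\Hstar$ (and hence in $K_T$), and similarly for $F_S$. So it suffices to show $f[L^t_p]\subseteq F_S$ for every $t$, and by symmetry (applying the argument to $f^{-1}$) this gives $f[F_T]=F_S$.

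Next, fix $t\in T$ and consider the image $f[L^t_p]$, a nowhere dense indecomposable subcontinuum of $K_S$. By Lemma~\ref{lemma.comparable} applied inside $\Hstar$, for each $s\in S$ with $f[L^t_p]\cap L^s_q\neq\emptyset$ we have $f[L^t_p]\subseteq L^s_q$ or $L^s_q\subseteq f[L^t_p]$; the latter cannot happen for all such $s$ because $f[L^t_p]$ is nowhere dense while $\bigcup_s L^s_q$ is dense in $K_S$, so (using that the $L^s_q$ form a chain) either $f[L^t_p]\subseteq L^s_q$ for some $s$, giving $f[L^t_p]\subseteq F_S$ immediately, or $f[L^t_p]$ misses $\bigcup_{s\in S}L^s_q=F_S$ entirely. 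The task is to rule out the second alternative. Here I would use that $L^t_p$ is a P-set in $K_T$ together with the fact that $L^t_p$ has a dense-in-itself "interior side": points of $K_T\setminus F_T$ are limits of points of $F_T$, so $L^t_p$ — being nowhere dense in $K_T$ — is accumulated by the sets $L^s_p$ with $s\succ t$, and those must map into the corresponding accumulating structure around $f[L^t_p]$. If $f[L^t_p]$ lay in $K_S\setminus F_S$, then a neighbourhood trace argument: every point of $F_T$ near $L^t_p$ maps to a point of $F_S$ near $f[L^t_p]$, forcing $f[L^t_p]\subseteq \cl F_S\setminus F_S$; but $L^t_p$ is a P-set and $F_T$ near it is an $F_\sigma$ whose closure contains $L^t_p$, and the P-set property transported through $f$ contradicts $f[L^t_p]$ being squeezed into the nowhere dense "boundary" $K_S\setminus F_S$ in the way a countable-cofinality layer would be — essentially the same contradiction as in the proof that $L^t_p$ is not a countable-cofinality layer.

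The main obstacle, and where I expect the real work to be, is precisely this last step: ruling out that $f[L^t_p]$ lands entirely in $K_S\setminus F_S$. The cleanest route is probably to argue that $F_T$ can be characterized intrinsically inside $K_T$ — for instance as the set of points $x$ that have a neighbourhood base (in $K_T$) of sets whose closures are contained in a proper subcontinuum of $K_T$, or as $\bigcup\{L : L$ a proper subcontinuum of $K_T$ that is a P-set and lies in a still larger proper subcontinuum$\}$ — a property preserved by homeomorphism. Then $f[F_T]=F_S$ follows formally. Making such an intrinsic characterization precise, and checking that the "exceptional" layers $K_T\setminus F_T$ genuinely fail it (which is where one re-uses that each $L^s_p$ is not a countable-cofinality layer and that $\Hstar$ is an $F$-space), is the delicate part; everything else is bookkeeping with Lemmas~\ref{lemma.interval}--\ref{lemma.K-int-L} and the chain structure of $\{L^t_p:t\in T\}$.
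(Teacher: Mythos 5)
Your proposal has the right architecture --- reduce to showing $f[A^t_p]\subseteq F_S$ for each $t$, use Lemma~\ref{lemma.comparable} and nowhere density to get the dichotomy ``$f[A^t_p]$ lands inside some layer $L^s_q$, or it misses $F_S$ entirely'', and then rule out the second alternative --- and you correctly flag that second alternative as the crux. But you leave the crux unproved: the ``neighbourhood trace argument'' is not an argument, and the fallback you propose (an intrinsic, homeomorphism-invariant characterization of $F_T$ inside $K_T$) is both unexecuted and unnecessary. That is a genuine gap, because everything before it is bookkeeping.

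The missing step is short and you already have every ingredient in hand. Since $S$ is a mean linear order it has countable cofinality, so fixing a cofinal sequence $\omegaseq s$ in $S$ gives $F_S=\bigcup_{n}A^{s_n}_q$: a \emph{dense $F_\sigma$} in $K_S=\cl F_S$. On the other side, $A^t_p$ is a P-set in $\Hstar$, hence in the closed subspace $K_T$, and $f$ is a homeomorphism, so $f[A^t_p]$ is a P-set in $K_S$. A nonempty P-set cannot avoid a dense $F_\sigma$ of the ambient space: if $f[A^t_p]\cap A^{s_n}_q=\emptyset$ for all $n$, then each $K_S\setminus A^{s_n}_q$ is a neighbourhood of $f[A^t_p]$, and the P-set property puts $f[A^t_p]$ in the interior of $\bigcap_n(K_S\setminus A^{s_n}_q)=K_S\setminus F_S$, contradicting the density of $F_S$. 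So $f[A^t_p]$ meets some $A^s_q$, hence meets $L^r_q$ for every $r\succ s$; by Lemma~\ref{lemma.comparable} and the nowhere density of $f[A^t_p]$ in $K_S$ one gets $f[A^t_p]\subseteq L^r_q\subseteq F_S$ for some such $r$ (if $L^r_q\subseteq f[A^t_p]$ held cofinally, $f[A^t_p]$ would contain the dense set $F_S$ and hence equal $K_S$). Symmetry via $f^{-1}$ finishes the proof. No intrinsic characterization of $F_T$, and no appeal to the countable-cofinality-layer argument, is needed; the P-set-versus-dense-$F_\sigma$ clash does all the work.
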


\begin{proof}
Let $t\in T$.
Because the P-set $f[A^t_p]$ is in the closure of the $F_\sigma$-set~$F_S$ 
it must actually intersect that set.
Thus there is an~$s\in S$ such that $f[A^t_p]\cap A^s_q\neq\emptyset$
and hence $f[A^t_p]\cap L^r_q\neq\emptyset$ whenever $s\prec r$ in~$S$.
It follows that $f[A^t_p]\subseteq L^r_q$ or $L^r_q\subseteq f[A^t_p]$
for all~$r\succ s$ and because $f[A^t_p]$ is nowhere dense in~$K_S$
we must have $f[A^t_p]\subseteq L^r_q$ for a final segment of~$r$ in~$S$.

This shows that $f[F_T]\subseteq F_S$ and, using $f^{-1}$ instead of~$f$,
we can also deduce that $F_S\subseteq f[F_T]$.
Thus we find that $F_T$ is mapped onto~$F_S$ by~$f$.
\end{proof}

Our aim is now to show that $T$ and $S$ have isomorphic final segments. 

Let $T'=\{t\in T:(\exists s\in S)(A^s_q\subseteq f[L^t_p])\}$
and, symmetrically, let  
$S'=\{s\in S:(\exists t\in T)(f[A^t_p]\subseteq L^s_q)\}$.
We shall show that $T'$ and $S'$ are isomorphic by showing that $f$~induces
an isomorphism between the families $\{L^t_p:t\in T'\}$ 
and $\{L^s_q:s\in S'\}$ (ordered by inclusion).

\smallskip
Let $t\in T'$ and consider $f[A^t_p]$; this is a decomposable continuum
and hence it is an interval of some standard subcontinuum.
We shall find $A\in\Gamma$ such that $f[A^t_p]$ is in fact an interval of~$A_q$.
To this end let $\bigl<[c_n,d_n]:n\in\omega\bigr>$ be a sequence of closed
intervals with $d_n=c_{n+1}$ for all~$n$ and let $r\in\Nstar$ be such
that $f[A^t_p]$ is an interval of~$[c_r,d_r]$.
For every $n$ let $i_n=\lfloor c_n\rfloor$ and $j_n=\lceil d_n\rceil$.

There is a member $R$ of~$r$ such that if $n<m$ in~$R$ then $j_n<i_m$ and
in this case we can assume that 
$\bigl<[i_n,j_n]:n\in R\bigr>$ is a subsequence of some $A\in\Gamma$.
It is clear that $[c_r,d_r]\subseteq[i_r,j_r]$ and it is also true that 
$q\in f[A^t_p]\subseteq[c_r,d_r]$; together these statements imply
that $A_q=[i_r,j_r]$, so that $f[A^t_p]$ is indeed an interval of~$A_q$. 

Now let $s_t\in S$ be such that $A\equiv_q A^{s_t}$ and fix some $s\in S$ 
such that $A^s_q\subseteq f[L^t_p]$.
We claim that $s\prec s_t$.
Indeed, if $s_t\preceq s$ then we find that 
$A^{s_t}_q\subseteq A^s_q\subseteq f[L^t_p]$ and hence that $A^{s_t}_q$ is nowhere
dense in $f[A^t_p]$ and hence in $A_q$, which contradicts $A\equiv_q A^{s_t}$.
Thus we find that $A^s_q\subseteq L^{s_t}_q$ and hence 
that $f[L^t_p]\cap L^{s_t}_q\neq\emptyset$.
But $f[L^t_p]$ is a layer of~$f[A^t_p]$ and hence of~$A_q\cup A^{s_t}_q$, as 
is~$L^{s_t}_q$ of course.
But then we must have $f[L^t_p]=L^{s_t}_q$.

Since $L^{t_1}_p$ is nowhere dense in $L^{t_2}_p$, whenever 
$t_1\prec t_2$ in~$T$,
the map $t\mapsto s_t$ from~$T'$ to~$S'$ is strictly increasing; 
that it is surjective follows by interchanging $S'$ and~$T'$ and 
considering~$f^{-1}$.

This shows that $T'$ and $S'$ are isomorphic.

\subsection{Many ordered sets}

We define a family of $2^{\aleph_1}$ many linear orders of countable cofinality
and without isomorphic final segments.

For a set $X$ of countable limit ordinals we define a linear order~$L_X$
by inserting upside-down copies of~$\omega$ into~$\omega_1$, one between
$\alpha$ and $\alpha+1$ for every $\alpha\in X$.
More formally we let 
$$
L_X=\{\orpr\alpha m\in\omega_1\times\omega: \alpha\notin X \rightarrow m=0\}
$$
ordered by $\orpr\alpha m\prec\orpr\beta n$ if
1)~$\alpha\in\beta$, or
2)~$\alpha=\beta$ and $m=0<n$, or
3)~$\alpha=\beta$ and $m>n>0$.

\begin{proposition}
$L_X$ and $L_Y$ are isomorphic iff $X=Y$.  
\end{proposition}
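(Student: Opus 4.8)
The plan is to prove both directions, the forward one being essentially trivial and the reverse one requiring an analysis of how isomorphisms interact with the two kinds of order structure present in $L_X$.

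First, the easy direction: if $X=Y$ then the definition of $L_X$ depends only on $X$, so $L_X=L_Y$ literally, hence they are isomorphic. So the content is in the converse: suppose $\Phi:L_X\to L_Y$ is an order isomorphism; I want to conclude $X=Y$. The idea is to recover $X$ from the internal order-theoretic structure of $L_X$ in a way that is preserved by $\Phi$. The key observation is that the elements $\orpr\alpha m$ with $m>0$ are exactly those that sit inside an inserted reversed copy of $\omega$, and such copies are recognizable: $\orpr\alpha m$ with $m>0$ has an immediate successor (namely $\orpr\alpha{m-1}$ if $m>1$, or $\orpr\alpha0$ if $m=1$) but, crucially, the set $\{\orpr\alpha k : k\ge m\}$ is a \emph{strictly decreasing $\omega$-sequence with no lower bound within that "block"} yet bounded below in $L_X$ by $\orpr\alpha0$ and everything below it. By contrast an element $\orpr\alpha0$ with $\alpha\notin X$ (successor or limit) does not lie at the top of such a configuration. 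First I would make precise the notion of "block": for each $\alpha$, the set $B_\alpha=\{\orpr\alpha m : \orpr\alpha m\in L_X\}$, and observe that $\alpha\in X$ iff $B_\alpha$ has order type $1+\omega^*$ (a least element $\orpr\alpha0$ followed by a reversed $\omega$), while $\alpha\notin X$ iff $B_\alpha$ is a singleton.

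The main work, and the step I expect to be the real obstacle, is showing that $\Phi$ must map blocks to blocks in an order-preserving way, i.e.\ that there is an induced order isomorphism $\bar\Phi$ of $\omega_1$ (the index set of the blocks) such that $\Phi[B_\alpha]=B_{\bar\Phi(\alpha)}$, and then arguing $\bar\Phi$ is the identity. For the first part I would characterize the blocks intrinsically: $x$ and $y$ lie in the same block iff the interval between them (in $L_X$) is finite; equivalently, the blocks are the equivalence classes of "finite distance". Since this is an order-theoretic notion it is preserved by $\Phi$, so $\Phi$ permutes the blocks and the induced map on the quotient — which is order-isomorphic to $\omega_1$ in both cases, since collapsing each $B_\alpha$ to a point returns $\omega_1$ — is an order isomorphism of $\omega_1$ onto itself, hence the identity. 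Therefore $\Phi[B_\alpha]=B_\alpha$ for every $\alpha$, and since $B_\alpha$ is a singleton exactly when $\alpha\notin X$ (for $L_X$) and exactly when $\alpha\notin Y$ (for $L_Y$), we get $X=Y$.

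One subtlety to handle carefully: I said "isomorphic iff $X=Y$", but the proposition as stated really is the special case; the harder-looking variant about final segments (needed for the $2^\cee$ count) would require $X$ and $Y$ to differ on a final segment, and there one replaces $\omega_1$ by an arbitrary ordinal and uses that an ordinal has no two isomorphic distinct final segments. For the present proposition, though, the argument above suffices, and the only place needing genuine care is verifying that "finite interval between $x$ and $y$" is an equivalence relation whose classes are precisely the $B_\alpha$ — transitivity is clear, and reflexivity/symmetry are immediate; the content is that distinct blocks are at infinite distance, which follows because between $\orpr\alpha m$ and $\orpr\beta n$ with $\alpha<\beta$ one finds all of $\orpr{\alpha+1}0,\orpr{\alpha+2}0,\dots$, infinitely many points. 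I would write this out, note the quotient computation, invoke rigidity of $\omega_1$, and conclude.
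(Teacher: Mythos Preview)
Your intrinsic characterization of the blocks is wrong, and with it the argument collapses. You claim that $x$ and $y$ lie in the same block $B_\alpha$ iff the interval between them is finite, but neither direction holds. For $\alpha\in X$ the block $B_\alpha$ has order type $1+\omega^*$, so between $\orpr\alpha0$ and $\orpr\alpha1$ lie the infinitely many points $\orpr\alpha k$ ($k\ge2$); thus $\orpr\alpha0$ is at infinite distance from the rest of its own block. Conversely, $\orpr\alpha1$ (the maximum of $B_\alpha$) and $\orpr{\alpha+1}0$ are immediate neighbours with nothing between them, yet they lie in different blocks. Your check that ``between $\orpr\alpha m$ and $\orpr\beta n$ with $\alpha<\beta$ one finds $\orpr{\alpha+1}0,\orpr{\alpha+2}0,\dots$'' fails exactly here: for $\beta=\alpha+1$ those points are not in the open interval. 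The actual finite-distance classes are: the singleton $\{\orpr\alpha0\}$ for each $\alpha\in X$; a copy of~$\Z$, namely $\{\orpr\alpha m:m\ge1\}\cup\{\orpr{\alpha+n}0:1\le n<\omega\}$, for each $\alpha\in X$; and a copy of~$\omega$, namely $\{\orpr{\alpha+n}0:n<\omega\}$, for each limit $\alpha\notin X$ and for $\alpha=0$.

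The strategy is easily repaired along different lines. The points of $L_X$ with no immediate predecessor are exactly the $\orpr\alpha0$ with $\alpha=0$ or $\alpha$ a limit; this set is order-invariant and has order type~$\omega_1$ in both $L_X$ and $L_Y$, so $\Phi$ restricts to the unique isomorphism between two copies of~$\omega_1$ and therefore sends $\orpr\alpha0$ to $\orpr\alpha0$ for every such~$\alpha$. Then observe that $\orpr\alpha0$ has no immediate successor iff $\alpha\in X$, another order-invariant, and conclude $X=Y$. The paper instead proves $f(\orpr\alpha0)=\orpr\alpha0$ and $\alpha\in X\Leftrightarrow\alpha\in Y$ simultaneously by transfinite induction on limit~$\alpha$; its successor-of-limit step uses essentially the same ``immediate successor'' observation.
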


\begin{proof}
Let $f:L_X\to L_Y$ be an isomorphism.
We show by induction that $f(\orpr\alpha0)=\orpr\alpha0$ for every limit
ordinal~$\alpha$ as well as $\alpha\in X$ iff $\alpha\in Y$.

In both $L_X$ and $L_Y$ the point $\orpr\omega0$ has $\omega\times\{0\}$
as its set of predecessors and so $f(\orpr\omega0)=\orpr\omega0$.
Assume $\alpha$ is a limit and that $f(\orpr\beta0)=\orpr\beta0$
for all limits below~$\alpha$.
If $\alpha$ is a limit of limits then in both ordered sets we have
$\orpr\alpha0=\sup\{\orpr\beta0:\beta\in\alpha, \beta$~is a limit$\}$
and hence $f(\orpr\alpha0)=\orpr\alpha0$.

Next assume $\alpha=\beta+\omega$ for a limit~$\beta$.
If $\beta\notin X$ then $\orpr{\beta+1}0$ is the direct successor in~$L_X$ 
of~$\orpr\beta0$, hence $\orpr\beta0$ must have a direct successor in~$L_Y$
as well.
From this it follows that $\beta\notin Y$ and $f(\orpr{\beta+n}0)=\orpr{\beta+1}0$
for all~$n\in\omega$ and hence also $f(\orpr\alpha0)=\orpr\alpha0$.

If $\beta\in X$ then the interval $\bigl(\orpr\beta0,\orpr\alpha0\bigr)$
has the same order type as~$\Z$, the set of integers.
Now the interval $\bigl(\orpr\beta0, \orpr\beta1\bigr]$ is infinite and every
point in it has a direct predecessor. 
This means that $f(\orpr\beta1)\prec\orpr\alpha0$ and hence that $\orpr\beta0$
does not have a direct successor in~$L_Y$ and hence that $\beta\in Y$.
It follows that $f$~maps the interval $\bigl(\orpr\beta0,\orpr\alpha0\bigr)$
isomorphically onto the corresponding interval of~$L_Y$ and that
$f(\orpr\alpha0)=\orpr\alpha0$. 
\end{proof}

From $L_X$ we define $T_X$ to be the ordered sum of $\omega$ copies of~$L_X$:
$$
T_X=\omega\times L_X
$$
ordered lexicographically.
Now note that the points $\bigorpr n{\orpr00}$ are the only ones in~$T_X$
whose sets of predecessors have cofinality~$\aleph_1$.

Thus, if $f$ is an isomorphism between final segments of some~$T_X$
and~$T_Y$ then there an isomorphism~$g$ between final segments of~$\omega$
such that $f(n,0,0)=(g(n),0,0)$ for all in the final segment on the $T_X$-side.
For each such~$n$ the map~$f$ then maps $\{n\}\times L_X$
isomorphically onto $\{g(n)\}\times L_Y$.
It follows that $X=Y$.

This then provides us with our family of $2^{\aleph_1}$ many linear orders,
indexed by the family of sets of countable limit ordinals.

This proves the following theorem and with it the existence
of a family of $2^\cee$ many mutually nonhomeomorphic subcontinua of~$\Hstar$.

\begin{theorem}[$\CH$]\label{thm.many.orders}
There is a family of $2^\cee$ mean linear orders such that no two members
have isomorphic final segments.\qed
\end{theorem}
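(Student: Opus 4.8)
The plan is to build the required family as $\{T_X : X \subseteq \Lambda\}$, where $\Lambda$ denotes the set of countable limit ordinals, and $T_X$ is defined exactly as in the construction above (the ordered sum of $\omega$ copies of $L_X$, with $L_X$ obtained by inserting reversed copies of $\omega$ into $\omega_1$ at the ordinals in $X$). Since $|\Lambda| = \aleph_1$ we get $2^{\aleph_1}$ such sets $X$, and under $\CH$ we have $2^{\aleph_1} = 2^\cee$, so it suffices to check two things: that each $T_X$ is a mean linear order, and that $X \neq Y$ implies $T_X$ and $T_Y$ have no isomorphic final segments.

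First I would verify that each $T_X$ is mean. Cardinality at most $\aleph_1$ is clear since $|L_X| \le \aleph_1$ and we take $\omega$ copies. Countable cofinality (and in particular no maximum) holds because the copies $\{n\}\times L_X$ are cofinally arranged, and $L_X$ itself has no maximum (its ``top'' is a copy of $\omega_1$). The one genuine point to check is the absence of $\orpr\omega\omega$-gaps. This reduces to checking it in $L_X$: an increasing $\omega$-sequence in $L_X$ either stays within a bounded part of the $\omega_1$-skeleton, where it has a supremum since $\omega_1$ has no such gaps, or it is cofinal in a copy of $\omega_1$, in which case its supremum in $\omega_1$ works; meanwhile a decreasing $\omega$-sequence either stabilizes, or descends inside one of the inserted reversed-$\omega$ blocks and then has the obvious limit, or decreases through the $\omega_1$-part, where well-foundedness forbids it. So there is no pair of $\omega$-sequences forming a gap.

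Next I would run the rigidity argument, which is already essentially carried out in the excerpt: the Proposition shows $L_X \cong L_Y$ iff $X = Y$, by an induction recovering $f(\orpr\alpha0) = \orpr\alpha0$ and $\alpha \in X \Leftrightarrow \alpha \in Y$ simultaneously, using the order-theoretic distinction between ``$\orpr\beta0$ has a direct successor'' ($\beta \notin X$) and ``the interval above $\orpr\beta0$ looks like $\Z$'' ($\beta \in X$). Then, as the text after the Proposition explains, the points of $T_X$ whose initial segments have cofinality $\aleph_1$ are exactly the points $\bigorpr n{\orpr00}$; any isomorphism $f$ between a final segment of $T_X$ and a final segment of $T_Y$ must send such points to such points, hence induces an isomorphism $g$ between final segments of $\omega$ with $f(n,0,0) = (g(n),0,0)$, and then $f$ restricts to an isomorphism $\{n\}\times L_X \to \{g(n)\}\times L_Y$, forcing $L_X \cong L_Y$ and therefore $X = Y$.

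The main obstacle is really the gap-freeness verification for $T_X$: one must make sure that inserting the reversed copies of $\omega$ into $\omega_1$ does not accidentally create an $\orpr\omega\omega$-gap straddling an insertion point, and that the ordered-sum structure of $T_X$ introduces none either (a cofinal $\omega$-sequence in one block $\{n\}\times L_X$ is followed immediately by $\bigorpr{n+1}{\orpr\omega0}$ or by the block's own structure, so there is always a next point or a clean supremum). Everything else is either bookkeeping or a direct quotation of the Proposition and the remarks following it, so once the mean-ness of each $T_X$ is nailed down, the theorem follows. \qed
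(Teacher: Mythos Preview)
Your proposal is correct and follows the paper's approach exactly: the paper builds the family $\{T_X\}$ just as you describe, proves the Proposition that $L_X\cong L_Y$ iff $X=Y$, and then uses the cofinality-$\aleph_1$ points $\bigorpr{n}{\orpr00}$ to reduce an isomorphism of final segments to an isomorphism $L_X\cong L_Y$. Your explicit verification that each $T_X$ is mean is a welcome addition the paper leaves implicit; one small wrinkle is the parenthetical about ``a cofinal $\omega$-sequence in one block'' --- no such sequence exists since $L_X$ has cofinality~$\aleph_1$, which is precisely why no $\orpr\omega\omega$-gap can straddle the boundary between blocks --- but this does not affect the argument.
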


\subsection{Summary: two families of continua}

The combination of subsection~\ref{subsec:consequences} and 
Theorem~\ref{thm.many.orders} tells us that
$\{K_{T_X}:X$~a set of countable limit ordinals$\}$ is a family of $2^\cee$~many
indecomposable subcontinua of~$\Hstar$ that are mutually non-homeomorphic.

To get a family of $2^\cee$~many decomposable continua use 
Lemma~\ref{lemma.K-int-L} to deduce that in our construction
the continuum~$K_T$ is actually a layer of the `top continuum'~$A_{T^+}$.
Indeed, $K_T$~is a subset of some layer~$L$ of~$A_{T^+}$; if it were a proper
subset then there would be a standard subcontinuum~$M$ with
$K_T\subseteq M\subseteq L$.
As in subsection~\ref{subsec:consequences} we could then find~$A\in\Gamma$
such that $M$~is an interval of~$A$; yet there would be no $t\in T^+$
such that $A\equiv_p A_t$.

Our second family is now obtained by taking for every set $X$ of countable
limit ordinals the interval $[a_X,K_{T_X}]$ of the standard 
subcontinuum~$A_{T_X^+}$, where $a_X$~is the initial point of~$A_{T_X^+}$
as described in subsection~\ref{subsec:subcont}.
These decomposable continua are mutually non-homeomorphic because a 
homeomorphism between $[a_X,K_{T_X}]$ and~$[a_Y,K_{T_Y}]$ will have to map $a_X$
to~$a_Y$ (as these are the unique end points) and $K_{T_X}$ onto~$K_{T_Y}$,
the latter is not possible if $X\neq Y$.

\begin{remark}
The family in~\cite{MR2823685} consists of standard subcontinua.
By one of the results in~\cite{MR1282963} $\CH$~implies that all standard
subcontinua are homeomorphic.
Thus there is a striking difference between the effects of $\CH$ and $\neg\CH$
on the structure of family of standard subcontinua.

Our result shows that under~$\CH$ each standard subcontinuum has a rich variety
of layers and intervals.  
We leave as an open question how rich this variety is in $\ZFC$ alone.
\end{remark}

\section{A first-countable continuum}
\label{sec.bell}

\subsection{Bell's graph}

A major ingredient in our construction is Bell's graph, constructed
in~\cite{MR677860}.
It is a graph on the ordinal~$\omega_2$, represented by a symmetric
subset~$E$ of~$(\omega_2)^2$.
The crucial property of this graph is that there is \emph{no} map 
$\varphi:\omega_2\to\Pow(\N)$ that represents this graph,
where \emph{$\varphi$ represents~$E$} if
$\orpr\alpha\beta\in E$ if and only if $\varphi(\alpha)\cap\varphi(\beta)$ is
infinite.

Bell's graph exists in any forcing extension in which $\aleph_2$~Cohen 
reals are added; for the reader's convenience we shall,
in subsection~\ref{subsec:the-graph} below, describe the
construction of~$E$ and adapt Bell's proof so that it applies to continuous 
maps defined on~$\Hstar$.
The proof shows that a similar graph also exists in the extension by
$\aleph_2$~random reals.

\subsection{Building $\CE$}

Our starting point is a connected version of the Alexandroff double of the
unit interval, devised by Saalfrank~\cite{MR0031710}.
We topologize the unit square as follows.
\begin{enumerate}
\item a local base at points of the form $\orpr x0$ consists of the sets
      $$
       U(x,0,n)=(x-2^{-n},x+2^{-n})\times[0,1] \setminus
                \{x\}\times[2^{-n},1]
      $$
\item a local base at points of the form $\orpr xy$, with $y>0$ consists
      of the sets
      $$
       U(x,y,n) = \{x\}\times(y-2^{-n},y+2^{-n})
      $$
\end{enumerate}
We call the resulting space the \textsl{connected comb} and denote it by~$C$.
It is straightforward to verify that $C$ is compact, Hausdorff and connected;
it is first-countable by definition.

For each $x\in[0,1]$ and positive $a$ we define the following
cross-shaped closed subset of~$C^2$:
$$
D_{x,a}= 
\bigl(\{x\}\times[a,1]\times C\bigr)\cup\bigl(C\times\{x\}\times[a,1]\bigr)
$$
We note the following two properties of the sets $D_{x,a}$
\begin{enumerate}
\item if $a<b$ then $D_{x,b}$ is in the interior of~$D_{x,a}$, and
\item if $x\neq y$ then $D_{x,a}\cap D_{y,a}$ is the union of two squares:
       $\{x\}\times[a,1]\times\{y\}\times[a,1]$ and 
       $\{y\}\times[a,1]\times\{x\}\times[a,1]$.
\end{enumerate}

Next take any $\aleph_2$-sized subset of~$[0,1]$ and index it (faithfully)
as $\{x_\alpha:\alpha<\omega_2\}$.
We use this indexing to identify $E$ with the subset 
$\{\orpr{x_\alpha}{x_\beta}:\orpr\alpha\beta\in E\}$ of the unit square.
We remove from $C^2$ the following open set:
$$
\bigcup_{\orpr xy\notin E}
\Bigl(\bigl(\{x\}\times(0,1]\times\{y\}\times(0,1]\bigr) \cup
\bigl(\{y\}\times(0,1]\times\{x\}\times(0,1]\bigr)\Bigr)
$$
The resulting compact space we denote by $\CE$.
Observe that the intersections $D_{x_\alpha,a}\cap \CE$ represent~$E$ in the 
sense that $D_{x_\alpha,a}\cap D_{x_\beta,a}\cap \CE$ is nonempty if and only
if~$\orpr\alpha\beta\in E$.
We write $D^E_{x,a}=D_{x,a}\cap \CE$.

\subsection{$\CE$ is (arcwise) connected}  

To begin: the square~$S$ of the base line of~$C$ is a subset of~$\CE$ and 
homeomorphic to the unit square so that it is (arcwise) connected.

Let $\langle x,a,y,b\rangle$ be a point of~$\CE$ not in~$S$.
If, say, $a=0$ then 
$\bigl\{\orpr x0\bigr\}\times\bigl(\{y\}\times[0,b]\bigr)$ is an arc 
in~$\CE$ that connects $\langle x,0,y,b\rangle$ to the 
point~$\langle x,0,y,0\rangle$ in~$S$.
If $a,b>0$ then $\orpr xy\in E$, so the whole square
$\{x\}\times[0,1]\times\{y\}\times[0,1]$ is in~$\CE$ and it provides
us with an arc in~$\CE$ from $\langle x,a,y,b\rangle$ 
to $\langle x,0,y,0\rangle$.

\smallskip

We find that $\CE$ is a first-countable continuum.

\subsection{$C_E$ is not an $\Hstar$-image}

Assume $h:\Hstar\to \CE$ is a continuous surjection and consider, 
for each~$\alpha$, the sets $D^E_{x_\alpha,\frac34}$ 
and $D^E_{x_\alpha,\frac12}$.

Using standard properties of $\betaH$, see~\cite{Hart}*{Proposition~3.2},
we find for each~$\alpha$ a sequence 
$\bigl<(a_{\alpha,n},b_{\alpha,n}):n\in\N\bigr>$
of open intervals with rational endpoints, 
and with $b_{\alpha,n}<a_{\alpha,n+1}$ for all~$n$, 
such that 
$h\preim[D^E_{x_\alpha,\frac34}]\subseteq 
    \Ex O_\alpha\cap\Hstar \subseteq h\preim[D^E_{x_\alpha,\frac12}]$,
where $O_\alpha=\bigcup_n(a_{\alpha,n},b_{\alpha,n})$
and $\Ex O_\alpha=\betaH\setminus\cl(\HH\setminus O_\alpha)$.

Because the intersections of the sets~$D^E_{x_\alpha,a}$ represent~$E$
the intersections of the $O_\alpha$ will do this as well:
the conditions `$O_\alpha\cap O_\beta$ is unbounded' 
and `$\orpr\alpha\beta\in E$' are equivalent.

In the next subsection we show that for (many) $\orpr\alpha\beta$
this equivalence does not hold and that therefore $\CE$ is not a continuous
image of~$\Hstar$.

Note also that our continuum is not an $\Nstar$-image either: 
if $g:\Nstar\to \CE$ were continuous and onto we could use clopen subsets
of~$\Nstar$ and their representing infinite subsets of~$\N$ to contradict
the unrepresentability property of~$E$.

\subsection{Building the graph}
\label{subsec:the-graph}

We follow the argument from~\cite{MR677860} and we rely on Kunen's book 
\cite{MR597342}*{Chapter~VII} for basic facts on forcing.
We let $L=\{\orpr\alpha\beta\in(\omega_2)^2:\alpha\le\beta\}$ 
and we force with the partial order $\Fn(L,2)$ of finite partial functions 
with domain in~$L$ and range in~$\{0,1\}$.
If $G$~is a generic filter on~$\Fn(L,2)$ then we let 
$E=\{\orpr\alpha\beta: \bigcup G(\alpha,\beta)=1$ or 
$\bigcup G(\beta,\alpha)=1\}$.

To show that $E$ is as required we take a nice name~$\fname F$ for a function 
from~$\omega_2$ to~$(\Q^2)^\omega$ that represents a choice of open 
sets~$\alpha\mapsto O_\alpha$ as in above in that  
$F(\alpha)=\bigl<\orpr{a_{\alpha,n}}{b_{\alpha,n}}:n\in\omega\bigr>$
for all~$\alpha$.
As a nice name $\fname F$ is a subset of 
$\omega_2\times\omega\times\Q^2\times\Fn(L,2)$, where for each point
$\langle\alpha,n,a,b\rangle$ the set 
$\{p:\langle\alpha,n,a,b,p\rangle\in\fname F\}$ is a maximal antichain
in the set of conditions that forces the $n$th term of $\fname F(\alpha)$
to be $\orpr ab$.

For each $\alpha$ we let $I_\alpha$ be the set of ordinals that occur in the
domains of the conditions that appear as a fifth coordinate in the elements 
of~$\fname F$ with first coordinate~$\alpha$.
The sets $I_\alpha$ are countable, by the ccc of $\Fn(L,2)$.
We may therefore apply the Free-Set Lemma,
see \cite{MR795592}*{Corollary~44.2},
and find a subset~$A$ of~$\omega_2$ of cardinality~$\aleph_2$ such that
$\alpha\notin I_\beta$ and $\beta\notin I_\alpha$ whenever 
$\alpha,\beta \in A$ and $\alpha\neq\beta$.

Let $p\in\Fn(L,2)$ be arbitrary and take $\alpha$ and~$\beta$ in~$A$ with 
$\alpha<\beta$ and such that $\alpha>\eta$ whenever $\eta$ occurs in~$p$.
Consider the condition $q=p\cup\bigl\{\langle\alpha,\beta,1\rangle\bigr\}$. 
If $q$~forces $O_\alpha\cap O_\beta$ to be bounded in~$[0,\infty)$ then we 
are done: $q$~forces that the equivalence fails at $\orpr\alpha\beta$.

If $q$~does not force the intersection to be bounded we can extend~$q$
to a condition~$r$ that forces $O_\alpha\cap O_\beta$ to be unbounded.
We define an automorphism $h$ of~$\Fn(L,2)$ by changing the value of the
conditions only at~$\orpr\alpha\beta$: from~$0$ to~$1$ and vice versa.
The condition~$p$ as well as the values~$\fname F(\alpha)$ and $\fname F(\beta)$
are invariant under~$h$.
It follows that $h(r)$ extends~$p$ and
$$
h(r)\forces \hbox{$\bigcup\fname G(\alpha,\beta)=0$ and 
   $O_\alpha\cap O_\beta$ is unbounded}
$$
so again the equivalence is forced to fail at~$\orpr\alpha\beta$.

\begin{remark}
The argument above goes through almost verbatim to show that Bell's graph
can also be obtained adding $\aleph_2$ random reals.
When forcing with the random real algebra one needs only consider conditions
that belong to the $\sigma$-algebra generated by the clopen sets of the
product $\{0,1\}^L$; these all have countable supports so that, 
again by the ccc, one can define the sets $I_\alpha$ as before.
The rest of the argument remains virtually unchanged.
\end{remark}

\begin{remark}
Bell's original example from \cite{MR677860} was not easily made connected.
One obtains an essentially equivalent example by taking the square
of the Alexandroff double of the unit interval 
(the subspace $\bigl\{\orpr xi:x\in[0,1], i\in\{0,1\}\bigr\}$ of~$C$) 
and removing the points $\bigl<\orpr x1, \orpr y1\bigr>$ 
with $\orpr xy\notin E$.
\end{remark}

\begin{bibdiv}
  
\begin{biblist}

\bib{MR0251695}{article}{
   author={Arhangel{\cprime}ski{\u\i}, A. V.},
   title={The power of bicompacta with first axiom of countability},
   journal={Soviet Mathematics Doklady},
   volume={10},
   year={1969},
   pages={951\ndash 955},
   note={Russian original: Doklady Akademi\u{\i} Nauk SSSR
         \textbf{187} (1969) 967\ndash 970},
   review={\MR{0251695 (40 \#4922)}},
}

\bib{MR677860}{article}{
   author={Bell, Murray G.},
   title={The space of complete subgraphs of a graph},
   journal={Commentationes Mathematicae Universitatis Carolinae},
   volume={23},
   date={1982},
   number={3},
   pages={525--536},
   issn={0010-2628},
   review={\MR{677860 (84a:54050)}},
}

\bib{MR1058795}{article}{
   author={Bell, Murray G.},
   title={A first countable compact space that is not an $\Nstar$ image},
   journal={Topology and its Applications},
   volume={35},
   date={1990},
   number={2-3},
   pages={153--156},
   issn={0166-8641},
   review={\MR{1058795 (91m:54028)}},
}

\bib{MR2823685}{article}{
   author={Dow, Alan},
   title={Some set-theory, Stone-\v Cech, and $F$-spaces},
   journal={Topology and Applications},
   volume={158},
   date={2011},
   number={14},
   pages={1749--1755},
   issn={0166-8641},
   review={\MR{2823685}},
   doi={10.1016/j.topol.2011.06.007},
}

\bib{MR1282963}{article}{
   author={Dow, Alan},
   author={Hart, Klaas Pieter},
   title={\v Cech-Stone remainders of spaces that look like $[0,\infty)$},
   note={Selected papers from the 21st Winter School on Abstract Analysis
   (Pod\v ebrady, 1993)},
   journal={Acta Universitatis Carolinae. Mathematica et Physica},
   volume={34},
   date={1993},
   number={2},
   pages={31--39},
   issn={0001-7140},
   review={\MR{1282963 (95b:54031)}},
}

\bib{MR1679586}{article}{ 
 author={Dow, Alan}, 
 author={Hart, Klaas Pieter}, 
 title={$\omega^*$ has (almost) no continuous images}, 
 journal={Israel J. Math.}, 
 volume={109}, 
 date={1999}, 
 pages={29--39}, 
 issn={0021-2172}, 
 review={\MR{1679586 (2000d:54031)}}, 
 doi={10.1007/BF02775024}, 
} 

\bib{MR1707489}{article}{
   author={Dow, Alan},
   author={Hart, Klaas Pieter},
   title={A universal continuum of weight $\aleph$},
   journal={Transactions of the American Mathematical Society},
   volume={353},
   date={2001},
   number={5},
   pages={1819--1838},
   issn={0002-9947},
   review={\MR{1707489 (2001g:54037)}},
}
\bib{MR2425747}{article}{
   author={Dow, Alan},
   author={Hart, Klaas Pieter},
   title={A separable non-remainder of $\mathbb{H}$},
   journal={Proceedings of the American Mathematical Society},
   volume={136},
   date={2008},
   number={11},
   pages={4057--4063},
   issn={0002-9939},
   review={\MR{2425747 (2009h:03066)}},
   doi={10.1090/S0002-9939-08-09357-X},
}

\bib{MR795592}{book}{
   author={Erd{\H{o}}s, Paul},
   author={Hajnal, Andr{\'a}s},
   author={M{\'a}t{\'e}, Attila},
   author={Rado, Richard},
   title={Combinatorial set theory: partition relations for cardinals},
   series={Studies in Logic and the Foundations of Mathematics},
   volume={106},
   publisher={North-Holland Publishing Co.},
   place={Amsterdam},
   date={1984},
   pages={347},
   isbn={0-444-86157-2},
   review={\MR{795592 (87g:04002)}},
}

\bib{Hart}{incollection}{
    author={Hart, Klaas~Pieter},
     title={The \v{C}ech-Stone compactification of the Real Line},
      date={1992},
 booktitle={Recent progress in general topology},
    editor={Hu\v{s}ek, Miroslav},
    editor={van Mill, Jan},
 publisher={North-Holland},
   address={Amsterdam},
     pages={317\ndash 352},
    review={\MR{95g:54004}},
}

\bib{MR597342}{book}{
   author={Kunen, Kenneth},
   title={Set theory},
   series={Studies in Logic and the Foundations of Mathematics},
   volume={102},
   note={An introduction to independence proofs},
   publisher={North-Holland Publishing Co.},
   place={Amsterdam},
   date={1980},
   pages={xvi+313},
   isbn={0-444-85401-0},
   review={\MR{597342 (82f:03001)}},
}

\bib{Parovicenko63}{article}{
      author={Parovi{\v{c}}enko, I.~I.},
       title={A universal bicompact of weight $\aleph$},
        date={1963},
     journal={Soviet Mathematics Doklady},
      volume={4},
       pages={592\ndash 595},
        note={Russian original: { \emph{Ob odnom universal{\cprime}nom
  bikompakte vesa~$\aleph$}, Doklady Akademi\u{\i} Nauk SSSR \textbf{150}
  (1963) 36--39}},
      review={\MR{27\#719}},
}

\bib{MR0031710}{article}{
   author={Saalfrank, C. W.},
   title={Retraction properties for normal Haussdorff spaces},
   journal={Fundamenta Mathematicae},
   volume={36},
   date={1949},
   pages={93--108},
   issn={0016-2736},
   review={\MR{0031710 (11,194b)}},
}

\bib{MR1217484}{article}{ 
   author={Zhu, Jian-Ping},                
   title={Continua in ${\bf R}^\ast$}, 
   journal={Topology and Applications},      
   volume={50},    
   date={1993},
   number={2},
   pages={183--197},
   issn={0166-8641},
   review={\MR{1217484 (94c:54050)}},
   doi={10.1016/0166-8641(93)90020-E},
}                             
\end{biblist}
\end{bibdiv}

\end{document}